\definecolor{darkblue}{rgb}{0.0, 0.0, 0.8}
\definecolor{darkred}{rgb}{0.8, 0.0, 0.0}
\definecolor{darkgreen}{rgb}{0.6, 0.15, 0.15}
\newcommand{\rk}{\mathrm{rk}}
\newcommand{\R}{\mathbb{R}}
\newcommand{\Z}{\mathbb{Z}}
\newcommand{\F}{k}
\newcommand{\db}{d_{\mathrm{B}}}
\newcommand{\vect}{\mathrm{vect}_k}
\newcommand{\Int}{\mathrm{Int}}
\newcommand{\N}{\mathbb{N}}
\newcommand{\Ical}{\mathcal{I}}
\newcommand{\Jcal}{\mathcal{J}}
\newcommand{\Lcal}{\mathcal{L}}
\newcommand{\Rcal}{\mathcal{R}}
\newcommand{\Dcal}{\mathcal{D}}
\newcommand{\Ecal}{\mathcal{E}}
\newcommand{\Kcal}{\mathcal{K}}
\newcommand{\dgm}{\mathrm{dgm}}
\newcommand{\barc}{\mathrm{barc}}
\newcommand{\id}{\mathrm{id}}
\newcommand{\rank}{\mathrm{rank}}
\newcommand{\dero}{d_{\mathrm{E}}}
\newcommand{\hatdero}{\hat{\dero}}
\newcommand{\dhat}{\hat{d}}
\newcommand{\dd}{d_{\ell}}
\newcommand{\dint}{d_{\mathrm{I}}}
\newcommand{\abs}[1]{\left\lvert{#1}\right\rvert}
\newcommand{\dw}[1]{d_{\mathrm{W},#1}}
\newcommand{\gpdpairs}{GPDs relative to sampled intervals}
\newcommand{\gpdpairsdiff}{GPDs relative to (possibly different) sampled intervals}
\newcommand{\codelink}{\href{https://github.com/L-ebesgue/sparse_GPDs}{sparse GPDs}}
\title{Sparsification of the Generalized Persistence Diagrams for Scalability through Gradient Descent} 
\author{Mathieu Carrière}{Centre Inria d'Université Côte d'Azur, Sophia Antipolis, France}{mathieu.carriere@inria.fr}{}{Partially supported by ANR grant “TopModel”, ANR-23-CE23-0014, and supported by the French government, through the 3IA Cote d’Azur Investments in the project managed by the National Research Agency (ANR) with the reference number ANR-23-IACL-0001.}
\author{Seunghyun Kim}{Department of Mathematical Sciences, KAIST, South Korea}{leo_k@kaist.ac.kr}{}{}
\author{Woojin Kim}{Department of Mathematical Sciences, KAIST, South Korea}{woojin.kim@kaist.ac.kr}{}{Partially supported by the National Research Foundation of Korea (NRF) grant funded by the Korea government(MSIT) (RS-2025-00515946).}
\authorrunning{M. Carrière, S. Kim and W. Kim}
\keywords{Multi-parameter persistent homology, Generalized persistence diagram, Generalized rank invariant, Non-convex optimization, Gradient descent}
\begin{document}

\maketitle

\begin{abstract}
    The generalized persistence diagram (GPD) is a natural extension of the classical persistence barcode to the setting of multi-parameter persistence and beyond. The GPD is defined as an integer-valued function whose domain is the set of intervals in the indexing poset of a persistence module, and is known to be able to capture richer topological information than its single-parameter counterpart. However, computing the GPD 
    is computationally prohibitive due to the sheer size of the interval set. Restricting the GPD to a subset of intervals provides a way to manage this complexity, compromising discriminating power to some extent. However, identifying and computing an effective restriction of the domain that minimizes the loss of discriminating power remains an open challenge.

    In this work, we introduce a novel method for optimizing the domain of the GPD through gradient descent optimization. To achieve this, we introduce a loss function tailored to optimize the selection of intervals, balancing computational efficiency and discriminative accuracy. The design of the loss function is based on the known erosion stability property of the GPD. We showcase the efficiency of our sparsification method for dataset classification in supervised machine learning. Experimental results demonstrate that our sparsification method significantly reduces the time required for computing the GPDs associated to several datasets, while maintaining classification accuracies comparable to those achieved using full GPDs. Our method thus opens the way for the use of GPD-based methods to applications at an unprecedented scale.
\end{abstract}

\section{Introduction}
Persistent homology, a central tool in topological data analysis (TDA), enables the study of topological features in datasets through algebraic invariants. In the classical one-parameter setting, the persistence barcode (or equivalently, the persistence diagram) serves as a complete, discrete, and computationally tractable invariant of a persistence module. Persistent homology can be extended to multi-parameter persistent homology, which provides tools for capturing the topological features of datasets using multiple filtrations instead of just one. However, the transition to multi-parameter persistent homology introduces significant complexity into the algebraic structure of the associated persistence modules \cite{bauer2020cotorsion,botnan2022introduction,carlsson2009theory}.

Nevertheless, the generalized persistence diagram (GPD) extends the notion of persistence diagram from the one-parameter to the multi-parameter setting in a natural way \cite{kim2021generalized,patel2018generalized}. Although the GPD has been extensively studied in terms of stability, discriminating power, computation, and generalizations (see, e.g., \cite{asashiba2024interval,clause2022discriminating,dey2022computing,dey2024computing,kim2021generalized,kim2023persistence,kim2024extracting,kim2021bettis}), the computational complexity of GPDs remains a major obstacle \cite{kim2024superpoly}.
The primary challenge arises from the size of their domain: the domain of the complete GPD is either $\Int(\R^d)$, the set of all intervals in $\R^d$, or any appropriately chosen, finite subset of $\Int(\R^d)$---however, in this case, domains still tend to be enormous to avoid sacrificing the GPD discriminating power \cite{clause2022discriminating}.
Nevertheless, GPDs are flexible, in the sense that they are still well-defined on \emph{any finite} subdomain $\Ical\subset \Int(\R^d)$. Even if the subdomain has small size, as the GPD over $\Ical$ is simply defined as the M\"obius inversion of the generalized rank invariant (GRI) over $\Ical$ (see \Cref{def:GPD}). 
This allows to control the aforementioned complexity of GPD computation by picking a small, or \emph{sparse}, subdomain $\Ical$. Moreover, the topological information loss due to using such sparse subdomains 
can be mitigated by looking for \emph{relevant} intervals, i.e., intervals that are rich in topological content; indeed, even with a substantially small subdomain $\Ical\subset \Int(\R^d)$,
the GPD over $\Ical$ can still be finer than other traditional invariants of multi-parameter persistence modules, such as the rank invariant (RI) \cite{carlsson2009theory};
see \cite{clause2022discriminating} for details. 
However, how to design these subdomains in the ``best'' way so as to reduce computational cost while maintaining the discriminating power of the GPD is
a question that has not been much explored so far.

\subparagraph*{Sparsification of the GPD via gradient descent.} 
Motivated by the computational challenges of the GPD and the flexibility of this invariant upon selecting subsets of intervals as its domain, we propose a method for \emph{automatically sparsifying GPDs} computed from $\R^2$-persistence modules based on gradient descent. Namely, we consider the following scenario.

Suppose we aim at classifying instances of a given dataset based on their topological features, and we have already computed a set of corresponding persistence modules $\{M_i:\R^2\to \vect\}_{1 \leq i \leq t}$ from the dataset, where each persistence module corresponds to an individual data point. We consider the set  $\{\dgm_{M_i}^{\Ical}\}_{1\leq i\leq t}$ of GPDs of $M_i$ over a large set $\Ical$ of intervals in $\R^2$ (cf. \Cref{def:GPD}).
We refer to these as the \emph{full GPDs}. 

Let $n \gg 1$ be the cardinality of $\Ical$, and let $m$ be a sparsification parameter $m \in \mathbb{N}^*=\{1,2,\ldots\}$, which is typically significantly smaller than $n$. 
Let $\binom{\Int(\R^2)}{m}$ denote the set of $m$-subsets of $\Int(\R^2)$, i.e., $\{\Jcal\subset \Int(\R^2):\abs{\Jcal}=m\}$. In order to identify an $m$-subset of intervals in $\R^2$ over which the new GPDs are computed (and subsequently used to classify the persistence modules $\{M_i\}_{1 \leq i \leq t}$), we proceed as follows. \textbf{Firstly,} 
we identify a loss function \emph{defined on $\binom{\Int(\R^2)}{m}$}:
\begin{equation}\label{eq:ideal_loss}
\def\arraystretch{1.2}\begin{array}{rrcl}
    \Lcal_{\dd,m,\{M_i\}_{1\leq i \leq t}}: & \binom{\Int(\R^2)}{m} &\rightarrow& \R \\
    & \Jcal &\mapsto& \sum_{i=1}^{t} \dd(\dgm_{M_i}^\Jcal, \dgm_{M_i}^\mathcal{I}),
\end{array}
\end{equation}
where $\dd$ is an appropriate dissimilarity function. \textbf{Secondly,} we search for a minimizer of the loss function. 
The goal of this search is to identify a subset $\Jcal^\ast$ of $m$ intervals such that the GPDs of the $\{M_i\}_i$ over this sparse subset $\{\dgm_{M_i}^{\Jcal^\ast}\}_{1\leq i\leq t}$,
the \emph{sparse GPDs}, best approximate their corresponding full counterparts overall. 
One natural way of searching for a (local) minimizer is through \textbf{gradient descent}, starting from a randomly chosen $m$-subset $\Jcal_{\text{init}}$ of $\Ical$. 
To achieve this, the following requirements either must be met or highly desirable:
\begin{enumerate}[label=(\Roman*)]
\item (Distance)
A suitable distance or dissimilarity function $\dd$, utilized in constructing the loss function above, ideally satisfying certain stability guarantees w.r.t. the \emph{interleaving distance} between persistence modules \cite{chazal2009proximity,lesnick2015theory},
\label{goal:good_distance}
\item (Vectorization) 
A certain representation of the loss function $\Lcal_{\dd,m,\{M_i\}_{1\leq i \leq t}}$ as a map defined on some subset $\Dcal$ of Euclidean space,\label{goal:vectorization}
\item (Convexity) 
Convexity of the subset $\Dcal$,
\label{goal:convexity}
\item (Loss regularity)  Lipschitz stability and differentiability of 
$\Lcal_{\dd,m,\{M_i\}_{1\leq i \leq t}}$, and \label{goal:continuity}
\item (Feasibility)  Computational feasibility of $\Lcal_{\dd,m,\{M_i\}_{1\leq i \leq t}}$ for practical implementation.
\label{goal:feasibility}
\end{enumerate}

Our contributions can be listed according to Items \ref{goal:good_distance}-\ref{goal:feasibility} mentioned above.

\subparagraph*{Summary of contributions.}

\begin{itemize}[leftmargin=1pt]
\item A natural choice for $\dd$ in Item \ref{goal:good_distance} would be to use the \emph{erosion distance} $\dero$ \cite{clause2022discriminating,patel2018generalized}, a standard metric between GPDs,\footnote{
Note that $\dero$ is also referred to as a metric between generalized rank invariants (GRIs), e.g., in \cite{clause2022discriminating,kim2021spatiotemporal}. Since the GPD and the GRI 
determine one another (cf. Remark \ref{rem:monotinocity} \ref{item:finite-then-exists} and \ref{item:exists-then-determine-each-other}), $\dero$ can thus also be viewed as a metric between GPDs, as in \cite{patel2018generalized,xian2022capturing}.} 
that is known to be stable under perturbations of input persistence modules w.r.t. the interleaving distance. 
However, the use of the erosion distance requires GPDs to be defined on the \emph{same} set of intervals (Definition \ref{def:erosion}) that is \emph{closed under thickening}, which implies the domain must be \emph{infinite}. All these make it difficult to directly utilize $\dero$.
Hence, \textbf{we introduce the sparse erosion distance $\hatdero$ between GPDs relative to (possibly) \emph{different} interval sets,
as an adaptation of $\dero$ (\Cref{def:hat_erosion},  \Cref{eros=intleav}~\ref{item:hat_generalizes_dE}, and \Cref{cor:stability})}.\footnote{Another possibility would be to use bottleneck and Wasserstein distances, however we show in \Cref{rmk:dB_and_dW_discontinuity} in the appendix that they fail to ensure continuity of the loss function.}

\item Regarding \Cref{goal:feasibility}, 
we restrict our focus to intervals in $\R^2$ with a small number of minimal and maximal points (\cref{rmk:computational_difficulty_when_pq_gets_larger}).
Then, if the sparse erosion distance $\hatdero$ is computed between GPDs of the \emph{same} persistence module $M$, 
\textbf{we prove that $\hatdero$ depends solely on the domains $\Ical, \Jcal$, and \emph{not} 
on the input persistence module, i.e. $\hatdero((\dgm_M,\Ical),(\dgm_M,\Jcal)) =\dhat(\Ical, \Jcal)$ (\textbf{\Cref{eros=intleav}~\ref{item:eros-intleav}})}. 
This fact significantly enhances the tractability of gradient descent as it allows to avoid recomputing the GPDs at every iteration.
Moreover, \textbf{we derive a closed-form formula for the computation of $\dhat(\Ical, \Jcal)$ (\Cref{thm_distance_btw_collections_intvs} \ref{item:between_p,q_intervals}).}

\begin{figure}[h]
    \centering
    \includegraphics[width=0.3\linewidth]{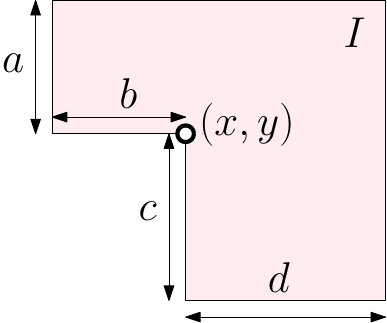}
    \caption{\label{fig:embedding} Any $(2,1)$-interval $I$ of $\R^2$, as depicted above, is represented by $\mathbf{v}_{I}=(x,y,a,b,c,d)\in\R^{6}$. Any $(1,1)$-interval can also be represented by $(x,y,a,b,c,d)\in\R^{6}$ with $b=c=0$.}
\end{figure}

\begin{figure}[h]
    \centering
    \includegraphics[width=\textwidth]{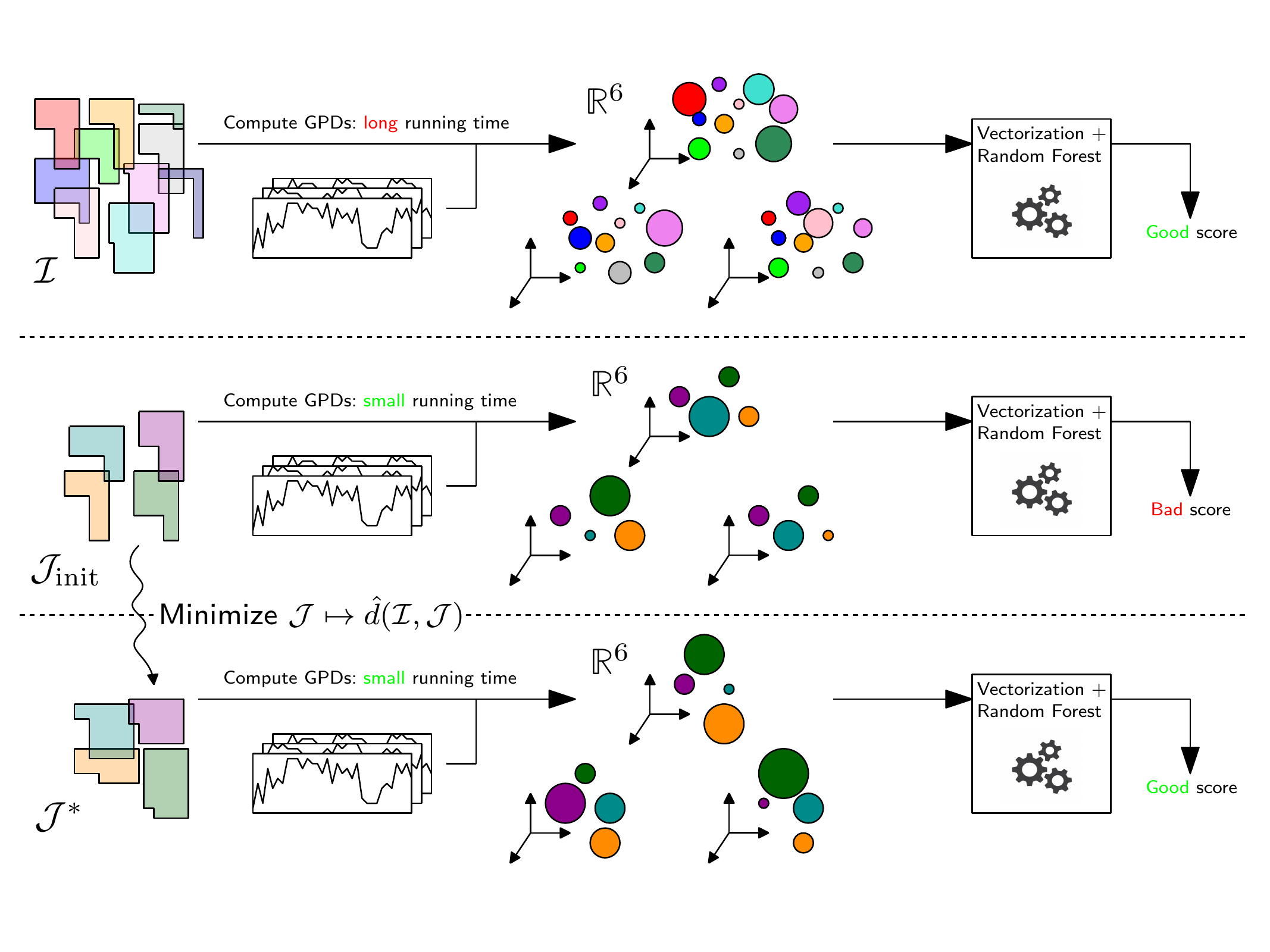}
    \caption{Our pipeline for sparsifying GPDs in the context of time series classification. The sizes of the GPD points in $\mathbb{R}^6$ are proportional to their multiplicities. 
    }
    \label{fig:pipeline}
\end{figure} 

\item 
For achieving \Cref{goal:vectorization,goal:convexity,goal:continuity}, we only consider intervals with at most two minimal points and one maximal point.
This ensures the existence of natural embeddings of these intervals into Euclidean space $\R^6$, which can be obtained by stacking up the coordinates of the interval middle points $(x,y)$ together with the lengths $a,b,c,d\geq 0$ needed to define the interval lower boundaries (\Cref{fig:embedding}).
The main advantages of this vectorization method (w.r.t. the other natural ones) are that (1) it allows for \textbf{a simple formulation of the loss function (\Cref{thm_distance_btw_collections_intvs} \ref{item:between_2,1_intervals})}, and (2) its variables are all independent from each other, which makes the implementation of gradient descent easier (Remarks \ref{rmk:computational_difficulty_when_pq_gets_larger} and \ref{rmk:convex_vectorization_for_larger_pq}).\footnote{Alternatively, we can consider intervals with one minimal points and at most two maximal points. Either choice ensures that our restricted GPDs are not a weaker invariant than the rank invariant or the signed barcode \cite{botnan2021signed}; see \cite[Section 4]{clause2022discriminating}.}
Using this vectorization method, \textbf{we also prove that $\Dcal$ not only forms a convex subset of Euclidean space $\R^{6m}$ (\textbf{\Cref{prop:convexity}}), but also ensures the Lipschitz stability and almost everywhere differentiability of the loss function $\Lcal_{\dd,m,\{M_i\}_{1\leq i \leq t}}$ (\Cref{thm:de-vectorization_stability} and \Cref{prop:differentiable})}. 
In fact, we prove that the loss function can actually be realized as
\begin{equation}\label{eq:practical_loss}
\def\arraystretch{1.2}\begin{array}{rrcl}
    \Lcal_{\hatdero,m}: & \Dcal(\subset \R^{6m}) &\rightarrow& \R \\
    & \mathbb{v}_\Jcal &\mapsto& t\cdot\dhat(\Ical,\Jcal),
\end{array}
\end{equation}
where $\mathbb{v}_{\Jcal}$ is our $6m$-dimensional embedding of $\Jcal$. 
Since $t$ (the size of the dataset) is a constant, the (local) minimizers of $\Lcal_{\hatdero,m}$ coincide with those of the map $t^{-1}\cdot\Lcal_{\hatdero,m}: \mathbb{v}_\Jcal \mapsto \dhat(\Ical, \Jcal)$. 
Hence, searching for a (local) minimizer of $\Lcal_{\hatdero,m}$ is essentially searching for the (locally) best $m$ intervals that represent the domain $\Ical$ of the full GPDs.
 
\item  
Finally, regarding \Cref{goal:feasibility} again, and in order to showcase the efficiency of our proposed sparsification method, \textbf{we provide numerical experiments on topology-based time series 
classification with supervised machine learning (\Cref{sec:expes} and \Cref{fig:pipeline})}, in
which we show that the sparse GPDs $\{\dgm_{M_i}^{\Jcal^\ast}\}_{1\leq i\leq t}$ can be computed in much faster time than the full GPDs, while maintaining similar or better classification performances from random forests models. Our code is fully available at \codelink.
\end{itemize}

\subparagraph*{Comparison with other works.}
While there exist many existing works that utilize multi-parameter persistent homology to enhance the performance of machine learning models \cite{Carriere2020b, Corbet2019, Loiseaux2023b, loiseaux2024stable, Mukherjee2024, Scoccola2024, Vipond2020, xin2023gril}, our work takes a different approach as it focuses on methods to mitigate the computational overhead associated with multi-parameter persistence descriptors.

The works most closely related to ours are~\cite{loiseaux2024stable} and \cite{xin2023gril}, which use the RI or GRI for multi-parameter persistence modules in a machine learning context.
Firstly, \cite{loiseaux2024stable} is based on an equivalent representation of the GPDs (computed from rectangle intervals\footnote{This types of GPDs are often called signed barcodes.}) as \emph{signed measures}~\cite{botnan2021signed}, which allows to compare GPDs with optimal transport distances, as well as to deploy known vectorization techniques intended for general measures. Secondly, the approach proposed in \cite{xin2023gril} involves vectorizing the GRIs 
of 2-parameter persistence modules by evaluating them on intervals with specific shapes called \emph{worms}.
Our goal is different: we rather aim at vectorizing and sparsifying the \emph{domains} of the GPDs in order to achieve good performance scores. In fact, the sparsification process that we propose can actually be used complementarily to both \cite{loiseaux2024stable} and \cite{xin2023gril} by first sparsifying the set of rectangles or worms before applying their vectorization methods.
Note that differentiability properties of both of these approaches \emph{w.r.t. the multi-parameter filtrations} were recently established~\cite{Mukherjee2024,Scoccola2024}; in contrast, our work deals with the differentiability of a GPD-based loss function \emph{w.r.t. the interval domains} (while keeping the multi-parameter filtrations fixed).

\subparagraph*{Organization.} \Cref{sec:preliminaries} reviews  basic properties of the GPD and GRI. \Cref{sec_dhat} introduces the \emph{sparse} erosion distance, clarify its relation to the erosion distance given in \cite{clause2022discriminating}, and presents its closed-form formula, which is specialized and useful in our setting. \Cref{sec:vectorization} establishes the Lipschitz stability and differentiability of our loss function. \Cref{sec:expes} presents our numerical experiments. Finally, \Cref{sec:conclusion}  discusses future research directions.

\section{Preliminaries}\label{sec:preliminaries}
In this article, $P=(P,\leq)$ stands for a poset, regarded as the category whose objects are the elements of $P$, and for any pair $p,q\in P$, there exists a unique morphism $p\to q$ if and only if $p\leq q$.
All vector spaces in this article are over a fixed field $\F$. 
Let $\vect$ denote the category of finite-dimensional vector spaces and linear maps over $\F$. A functor $P \to \vect$ will be referred to as a \textbf{($P$-)persistence module}. 
The \textbf{direct sum} of any two $P$-persistence modules is defined pointwise. 
Any $M:P\rightarrow \vect$ is \textbf{trivial} if $M(x)=0$ for all $x\in P$. If a nontrivial $M$ is not isomorphic to a direct sum of any two nontrivial persistence modules, $M$ is \textbf{indecomposable}. Every persistence module is decomposed into a direct sum of indecomposable modules, uniquely determined up to isomorphism \cite{azumaya1950corrections,botnan2020decomposition}.

An \textbf{interval} $I$ of $P$ is a subset $I\subseteq P$ such that: 	
\begin{enumerate*}[label=(\roman*)]
    \item $I$ is nonempty.    
    \item If $p,q\in I$ and $p\leq r\leq q$, then $r\in I$. \label{item:convexity}
    \item $I$ is \textbf{connected}, i.e. for any $p,q\in I$, there is a sequence $p=p_0,
		p_1,\cdots,p_\ell=q$ of elements of $I$ with $p_i$ and $p_{i+1}$ comparable for $0\leq i\leq \ell-1$.\label{item:interval3}
\end{enumerate*}
By $\Int(P)$ we denote the set of all intervals of $P$.

Given any $I\in \Int(P)$, the \textbf{interval module} $\F_I$ is the $P$-persistence module, with
\begin{equation*}\label{eq:interval module} 
    (\F_I)(p) := \begin{cases} \F & \mathrm{if \ } p\in I\\
    0 & \mathrm{otherwise.} \end{cases},
    \hspace{10mm}
    \F_I(p\leq q) := \begin{cases} \id_\F & \mathrm{if \ } p\leq q\in I\\
    0 & \mathrm{otherwise.}\end{cases}
\end{equation*}

Every interval module is indecomposable \cite[Proposition 2.2]{botnan2018algebraic}. \label{nom:barcode} 
A $P$-persistence module $M$ is \textbf{interval-decomposable} if it is isomorphic to a direct sum $\bigoplus_{j\in J}\F_{I_j}$ of interval modules. In this case, the \textbf{barcode} of $M$ is defined as the multiset $\barc(M):=\{\F_{I_j}:j\in J\}$.

For $p\in P$, let $p^{\uparrow}$ denote the set of points $q\in P$ such that $p\leq q$. Clearly, $p^\uparrow$ belongs to $\Int(P)$.
A $P$-persistence module is \textbf{finitely presentable} if it is isomorphic to the cokernel of a morphism 
$\bigoplus_{a \in A} \F_{a^\uparrow} \to \bigoplus_{b \in B}  \F_{b^\uparrow},$ where $A$ and $B$ are finite multisets of elements of $P$.

When $P$ is a connected poset, the \textbf{(generalized) rank} of $M$, denoted by  $\rank(M)$, is defined as the rank of the canonical linear map from the limit of $M$ to the colimit of $M$, which is a nonnegative integer \cite{kim2021generalized}. This isomorphism invariant of $P$-persistence modules, which takes a single integer value, can be refined into an integer-valued function as follows.
Let $\Ical$ be any nonempty subset of $\Int(P)$. The \textbf{generalized rank invariant (GRI) of $M$ over $\Ical$} is the map $\rk_M^\Ical:\Ical \to \Z_{\geq 0}$ given by
$I\mapsto \rank(M\vert_I)$, where $M\vert_I$ is the restriction of $M$ to $I$ \cite{kim2021generalized}. 
When $\Ical=\Int(P)$, we denote $\rk_M^\Ical$ simply as $\rk_M$.

The generalized persistence diagram (GPD) of $M$ over $\Ical$ captures the changes of the GRI values when $I\in \Ical$ varies. Its formal definition follows.
\begin{definition}[{\cite{clause2022discriminating}}]\label{def:GPD}
    The \textbf{generalized persistence diagram (GPD) of $M$ over $\Ical$} is defined as the function $\dgm_M^\Ical:\Ical\rightarrow \Z$ that satisfies:\footnote{The condition in Equation \eqref{eq:rk_in_terms_of_dgm} is a generalization of \emph{the fundamental lemma of persistent homology} \cite{edelsbrunner2008computational}.} 
    \begin{equation}\label{eq:rk_in_terms_of_dgm}
        \mbox{for all } I\in \Ical,\hspace{3mm} \rk_M^\Ical(I)=\sum_{\substack{J\in \Ical\\ J\supseteq I}} \dgm_M^\Ical(J).
    \end{equation}
\end{definition}

\begin{remark}[{\cite[Sections 2 and 3]{clause2022discriminating}} and {\cite{botnan2021signed}}]\label{rem:monotinocity} 
\begin{enumerate}[label=(\roman*)]
    \item If $\Ical$ is finite, then $\dgm_M^\Ical$ exists.\label{item:finite-then-exists}
    \item If $\dgm_M^\Ical$ exists, then it is unique. \label{item:exists-then-unique} 
    \item If $\dgm_M^\Ical$ exists, then $\dgm_M^\Ical$ and $\rk_M^\Ical$ determine one another.\label{item:exists-then-determine-each-other}
    \item (Monotonicity) $\rk_M^\Ical(I)\leq \rk_M^\Ical(J)$ for any pair $I\supseteq J$ in $\Ical$. \label{item:monotonicity}
    \item (The GPD generalizes the barcode) Let $\Ical=\Int(P)$. If $M$ is interval decomposable, then $\dgm_M^\Ical$ exists. In this case, for any $I\in \Int(P)$,  $\dgm_M^\Ical(I)$ coincides the multiplicity of $I$ in $\barc(M)$. Also, $\dgm_M^\Ical$ often exists even when $M$ is not interval decomposable. \label{item:dgm-generalizes-barc}
    \item If $M$ is a \emph{finitely presentable} $\R^d$-persistence module, then the GPD over $\Int(\R^d)$ exists \cite[Theorem C(iii)]{clause2022discriminating}. \label{rem:fp-setting-GPD-exists}
\end{enumerate} 
\end{remark}

In the rest of the article, every $\R^d$-persistence module $M$ is assumed to be finitely presentable, thus its GPD over $\Int(\R^d)$ exists, and is denoted by $\dgm_M$.

\section{Sparse erosion distance between \gpdpairs} \label{sec_dhat}
We adapt the notion of erosion distance to define a distance between \emph{\gpdpairsdiff}. When comparing the same GPD \emph{with two different sampled intervals}, this distance simplifies to a distance between sampled intervals. This is relevant in our case, as we compare the full and sparse GPDs of the \emph{same} persistence module.

\subsection{Sparse erosion distance}
In this section, we review the definition of the erosion distance and adapt it to define the sparse erosion distance between \emph{\gpdpairs}.

For $\epsilon \in \R$, the vector $\epsilon(1, \ldots, 1) \in \R^d$ will be simply denoted by $\epsilon$ whenever there is no risk of confusion.
For $I\in \Int(\R^d)$ and $\epsilon \in \R_{\geq0}$, we consider the \emph{$\epsilon$-thickening} $I^{\epsilon} := \bigcup_{p \in I} B_{\epsilon}^\square(p)$ of $I$, where $B_{\epsilon}^\square(p)$ stands for the closed $\epsilon$-ball around $p$ w.r.t. the supremum distance, i.e. $B_{\epsilon}^\square(p)=\{q \in \R^d \mid p - \epsilon \leq q \leq p + \epsilon\}$. See Figure~\ref{fig:int-eps}.
A subset $\Ical\subset \Int(\R^d)$ is said to be \textbf{closed under thickening} if for all $I\in \Ical$ and for all $\epsilon \in \R_{\geq0}$, the interval $I^\epsilon $ belongs to $\Ical$.

\begin{figure}[h]
    \centering
    \includegraphics[width=.25\textwidth]{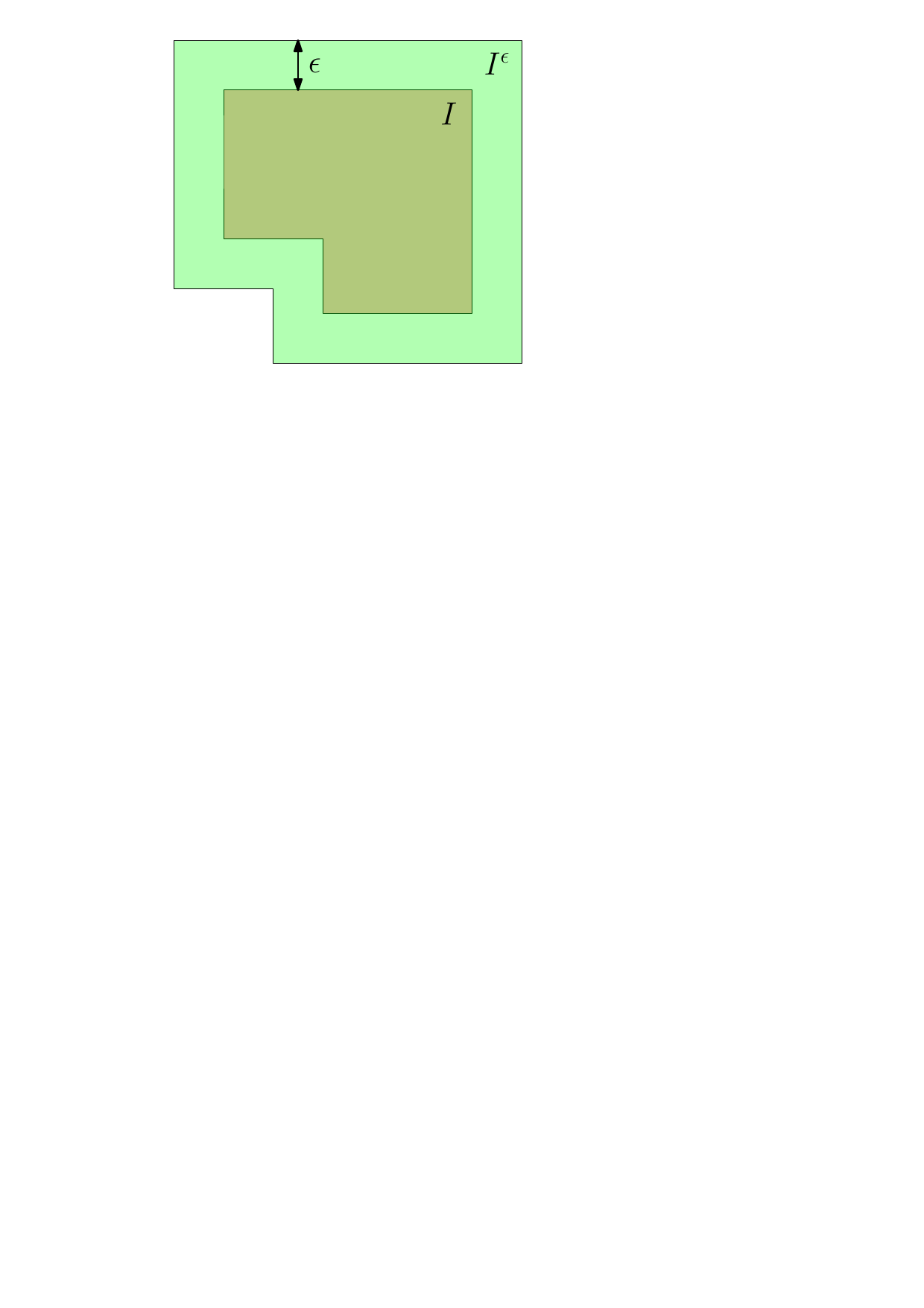}
    \caption{}
    \label{fig:int-eps}
\end{figure}
\begin{definition}[{\cite[Definition 5.2]{clause2022discriminating}}] \label{def:erosion} 
    Let $M$ and $N$ be $\R^d$-persistence modules. Let $\Ical$ be any subset of $\Int(\R^d)$ \emph{that is closed under thickening}. 
    The \textbf{erosion distance} between $\dgm_M^\Ical$ and $\dgm_N^{\Ical}$ (and equivalently between $\rk_M^\Ical$ and $\rk_N^{\Ical}$ by Remark \ref{rem:monotinocity} \ref{item:exists-then-unique}) is 
    \begin{equation*}
        \dero(\dgm_M^\Ical,\dgm_N^\Ical) :=\inf(\epsilon > 0:\mbox{for all } I\in \Ical,\ \rk_N(I^{\epsilon}) \leq \rk_M(I) \mbox{ and } \rk_M(I^{\epsilon}) \leq \rk_N(I)).
    \end{equation*}
\end{definition}
A \textbf{correspondence} between nonempty sets $A$ and $B$ is a subset  $\mathcal{R}\subset A \times B$ satisfying the following: (1) for each $a\in A$, there exists $b\in B$ such that $(a,b)\in \mathcal{R}$, and (2) for each $b\in B$, there exists $a\in A$ such that $(a,b)\in \mathcal{R}$.
For $\epsilon\in \R_{\geq0}$,  an \textbf{$\epsilon$-correspondence} $\mathcal{R}$ between nonempty $\Ical,\Jcal \subset \Int(\R^d)$ is a correspondence $\mathcal{R} \subset \mathcal{I} \times \mathcal{J}$ such that for all $(I, J) \in \mathcal{R}$, $J \subset I^{\epsilon} \; \mbox{ and } \; I \subset J^{\epsilon}.$
Blending the ideas of the Hausdorff and erosion distances, we obtain our new distance between \emph{\gpdpairs}:
\begin{definition}[Sparse erosion distance between \gpdpairs] 
\label{def:hat_erosion}
    For any $M,N:\R^d\rightarrow \vect$ and any nonempty $\Ical,\Jcal\subset \Int(\R^d)$, the \textbf{sparse erosion distance} between pairs $(\dgm_M,\Ical)$ and $(\dgm_N,\Jcal)$ is
    \begin{multline*}
        \hatdero((\dgm_M,\Ical),(\dgm_N,\Jcal))
        := \inf(\epsilon > 0: \mbox{ there exists an } \epsilon \textup{-correspondence}\; \mathcal{R} \subset \mathcal{I} \times \mathcal{J} \mbox{ s.t.} \\
        \forall (I, J) \in \mathcal{R},\  \forall \delta \in \R_{\geq 0},\; \rk_N(J^{\epsilon+\delta}) \leq \rk_M(I^\delta) \mbox{ and } \rk_M(I^{\epsilon+\delta}) \leq \rk_N(J^\delta)).
    \end{multline*}
\end{definition}
We remark that $\hatdero((\dgm_M,\Ical),(\dgm_N,\Jcal))$ captures not only (1) the algebraic difference between [$M$ with respect to $\Ical$] and [$N$ with respect to $\Jcal$], but also (2) the geometric difference between the domains $\Ical$ and $\Jcal$.
To see (1), consider, for example, the $\R$-persistence modules $M = \F_{[0,1]}$ and $N = \F_{[0,2]}$, and let both $\Ical$ and $\Jcal$ be the singleton set $\{[3,4]\}$. Then, one can see that $\hatdero((\dgm_M,\Ical),(\dgm_N,\Jcal)) = 0$ from the fact that $\rk_M^{\Ical} = \rk_N^\Jcal = 0$ and the monotonicity of $\rk_M$ and $\rk_N$.
To see (2), let $M$ and $N$ be any isomorphic $\R$-persistence modules, and set $\Ical := \{[0,\frac{1}{2}]\}$ and $\Jcal := \{[0,1]\}$. Then, we obtain $\hatdero((\dgm_M,\Ical),(\dgm_N,\Jcal)) \geq 1/2$, solely due to the difference between $\Ical$ and $\Jcal$.

\begin{proposition} \label{prop:pseudo}
    $\hatdero$ is an extended pseudometric. (See \Cref{A:pseudo} for the proof.)
\end{proposition}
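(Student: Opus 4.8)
The plan is to verify the three defining properties of an extended pseudometric in turn—vanishing on the diagonal, symmetry, and the triangle inequality—treating the last as the crux. Nonnegativity and the possibility of the value $+\infty$ (when no admissible $\epsilon$ exists, the infimum over the empty set is $+\infty$) are immediate from the form of \Cref{def:hat_erosion}. Throughout I would rely on two elementary facts: the additivity of thickening, $(I^{a})^{b} = I^{a+b}$ for all $a,b \in \R_{\geq 0}$ (a standard consequence of the triangle inequality for the supremum metric, obtained by inserting intermediate points along coordinatewise segments), and the monotonicity of generalized rank, namely $\rk_M(I) \leq \rk_M(J)$ whenever $I \supseteq J$ (Remark \ref{rem:monotinocity} \ref{item:monotonicity}), applied to thickenings.

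A useful preliminary observation is that, for a fixed pair of objects, the set $S$ of admissible $\epsilon$ in \Cref{def:hat_erosion} is upward closed in $(0,\infty)$: if $\epsilon \in S$ with witnessing correspondence $\mathcal{R}$ and $\epsilon' > \epsilon$, then $\mathcal{R}$ remains an $\epsilon'$-correspondence (thickenings only grow), and the rank inequalities for $\epsilon'$ follow from those for $\epsilon$ by monotonicity, since $J^{\epsilon'+\delta} \supseteq J^{\epsilon+\delta}$ and $I^{\epsilon'+\delta}\supseteq I^{\epsilon+\delta}$. Hence $\hatdero = \inf S$ and every $\epsilon$ strictly above the infimum lies in $S$, so an actual witnessing correspondence exists for it. For vanishing on the diagonal, I would feed the diagonal correspondence $\mathcal{R} = \{(I,I) : I \in \Ical\}$ into the definition: for any $\epsilon > 0$ it is trivially an $\epsilon$-correspondence, and the two rank inequalities both reduce to $\rk_M(I^{\epsilon+\delta}) \leq \rk_M(I^{\delta})$, which holds by monotonicity; thus $\hatdero((\dgm_M,\Ical),(\dgm_M,\Ical)) = 0$. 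Symmetry follows because the defining conditions are invariant under transposing a correspondence $\mathcal{R} \subset \Ical \times \Jcal$ to $\mathcal{R}^{\mathrm{T}} \subset \Jcal \times \Ical$: the $\epsilon$-correspondence constraint $J \subset I^{\epsilon},\ I \subset J^{\epsilon}$ is symmetric, and the two rank inequalities merely swap roles, so the admissible sets for the two orderings coincide.

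The main work is the triangle inequality for three objects $(\dgm_L,\Kcal)$, $(\dgm_M,\Ical)$, $(\dgm_N,\Jcal)$. If either of the two distances on the right is $+\infty$ the inequality is trivial, so assume both finite and fix $\epsilon_1 > \hatdero((\dgm_L,\Kcal),(\dgm_M,\Ical))$ and $\epsilon_2 > \hatdero((\dgm_M,\Ical),(\dgm_N,\Jcal))$; by the preliminary observation these are realized by an $\epsilon_1$-correspondence $\mathcal{R}_1 \subset \Kcal \times \Ical$ and an $\epsilon_2$-correspondence $\mathcal{R}_2 \subset \Ical \times \Jcal$ meeting the respective rank conditions. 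I would then compose them, setting $\mathcal{R} := \{(K,J) : \exists I \in \Ical,\ (K,I) \in \mathcal{R}_1,\ (I,J) \in \mathcal{R}_2\} \subset \Kcal \times \Jcal$, and show it certifies $\epsilon_1 + \epsilon_2$ for the outer pair. That $\mathcal{R}$ is a correspondence is routine from the two factors being correspondences. The $(\epsilon_1+\epsilon_2)$-correspondence property uses additivity of thickening: from $I \subset K^{\epsilon_1}$ and $J \subset I^{\epsilon_2}$ one gets $J \subset (K^{\epsilon_1})^{\epsilon_2} = K^{\epsilon_1+\epsilon_2}$, and symmetrically $K \subset J^{\epsilon_1+\epsilon_2}$. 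Finally, the rank inequalities are obtained by chaining the two sets of hypotheses through the intermediate witness $I$ with shifted slack parameters; for instance, applying the $\mathcal{R}_2$-inequality with $\delta$ replaced by $\epsilon_1 + \delta$ and then the $\mathcal{R}_1$-inequality with slack $\delta$ gives
\[
\rk_N\big(J^{\epsilon_1+\epsilon_2+\delta}\big) \leq \rk_M\big(I^{\epsilon_1+\delta}\big) \leq \rk_L\big(K^{\delta}\big),
\]
and the reverse inequality follows by the symmetric chaining (applying $\mathcal{R}_1$ with slack $\epsilon_2+\delta$ and then $\mathcal{R}_2$ with slack $\delta$). Hence $\epsilon_1 + \epsilon_2 \in S$ for the outer pair, so $\hatdero((\dgm_L,\Kcal),(\dgm_N,\Jcal)) \leq \epsilon_1 + \epsilon_2$; letting $\epsilon_1,\epsilon_2$ decrease to the respective distances yields the triangle inequality.

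The step I expect to be most delicate is the bookkeeping in this last chaining: one must choose the slack substitutions ($\delta \mapsto \epsilon_1 + \delta$ versus $\delta \mapsto \epsilon_2 + \delta$) so that the intermediate thickenings of $I$ line up, and invoke additivity of thickening consistently. The identity $(I^a)^b = I^{a+b}$ itself, while standard, is the one place a small geometric lemma about the supremum metric is needed, and I would state it explicitly before the main argument.
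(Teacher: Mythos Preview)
Your proposal is correct and follows essentially the same approach as the paper: compose the two witnessing correspondences through the middle domain, verify the composite is an $(\epsilon_1+\epsilon_2)$-correspondence via additivity of thickening, and chain the rank inequalities with shifted slack parameters. You are in fact slightly more careful than the paper in two places---you explicitly verify vanishing on the diagonal and symmetry (the paper only treats the triangle inequality), and your upward-closure observation cleanly justifies that a witnessing correspondence exists for every $\epsilon$ strictly above the infimum, a point the paper leaves implicit.
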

For any nonempty $\Ical,\Jcal \subset \Int(\R^d)$, let
\begin{equation} \label{def_dhat} 
    \dhat(\mathcal{I}, \mathcal{J}) := \inf(\epsilon > 0 :\; \textup{there exists an} \; \epsilon \textup{-correspondence between }  \mathcal{I} \textup{ and } \mathcal{J}).
\end{equation}
We clarify the relationship among $\dero$, $\hatdero$, and $\dhat$:
\begin{proposition} \label{eros=intleav}
    Let $M,N:\R^d\rightarrow \vect$ and let $\Ical,\Jcal\subset \Int(\R^d)$ be nonempty. We have:
    \begin{enumerate}[label=(\roman*)]
        \item $\dhat(\mathcal{I}, \mathcal{J}) \leq \hatdero((\dgm_M,\Ical),(\dgm_N,\Jcal))$. \label{item:dEhat_is_less_than_dhat}
        \item \label{item:hat_generalizes_dE} If $\Ical=\Jcal$ are closed under thickening, then 
        \begin{equation}\label{eq:hat_generalizes_dE}
            \hatdero((\dgm_M,\Ical),(\dgm_N,\Jcal)) \leq \dero(\dgm^\mathcal{I}_M, \dgm^\mathcal{I}_N). 
        \end{equation}
        \item \label{item:eros-intleav} If $M \cong N$ or $\dgm_M=\dgm_N$, then 
        \begin{equation}\label{eq:eros-intleav}
            \hatdero((\dgm_M,\Ical),(\dgm_N,\Jcal)) = \dhat(\mathcal{I}, \mathcal{J}).
        \end{equation}
    \end{enumerate}(See \Cref{A:eros=intleav} for the proof.)
\end{proposition}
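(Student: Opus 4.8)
The plan is to prove the three items in the order stated, exploiting monotonicity of the rank invariant (Remark \ref{rem:monotinocity} \ref{item:monotonicity}) and the structure of the $\epsilon$-correspondence conditions.

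For item \ref{item:dEhat_is_less_than_dhat}, I observe that the defining condition for $\hatdero$ is strictly stronger than that for $\dhat$: any $\epsilon$ that witnesses $\hatdero$ comes equipped with an $\epsilon$-correspondence $\Rcal$ satisfying the extra rank inequalities, but in particular $\Rcal$ is \emph{already} an $\epsilon$-correspondence between $\Ical$ and $\Jcal$. Hence every feasible $\epsilon$ for the infimum defining $\hatdero$ is also feasible for the infimum defining $\dhat$, so the latter infimum is taken over a larger set and can only be smaller. This yields $\dhat(\Ical,\Jcal) \leq \hatdero((\dgm_M,\Ical),(\dgm_N,\Jcal))$ immediately.

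For item \ref{item:hat_generalizes_dE}, I take $\Ical=\Jcal$ closed under thickening and aim to show that any $\epsilon > \dero(\dgm_M^\Ical,\dgm_N^\Ical)$ is feasible for $\hatdero$. The natural candidate correspondence is the diagonal-like relation $\Rcal := \{(I, I^\epsilon) : I \in \Ical\} \cup \{(I^\epsilon, I): I\in\Ical\}$ (or, more simply, pairing each $I$ with itself, since $I \subseteq I^\epsilon$ gives the required containments $J \subseteq I^\epsilon$ and $I \subseteq J^\epsilon$ when $J=I$); closure under thickening guarantees all these intervals lie in $\Ical=\Jcal$. The remaining task is to verify, for each such pair and every $\delta \geq 0$, the inequalities $\rk_N((\,\cdot\,)^{\epsilon+\delta}) \leq \rk_M((\,\cdot\,)^{\delta})$ and its symmetric counterpart. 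These follow from the erosion-distance inequalities $\rk_N(I^\epsilon)\leq \rk_M(I)$ and $\rk_M(I^\epsilon)\leq \rk_N(I)$ applied to the thickened interval $I^\delta$ in place of $I$, using the identity $(I^\delta)^\epsilon = I^{\epsilon+\delta}$. Taking the infimum over feasible $\epsilon$ gives \eqref{eq:hat_generalizes_dE}.

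Item \ref{item:eros-intleav} is the crux and, I expect, the main obstacle. The inequality ``$\geq$'' is just item \ref{item:dEhat_is_less_than_dhat}, so the work is the reverse inequality $\hatdero \leq \dhat(\Ical,\Jcal)$ under the hypothesis $M \cong N$ (equivalently $\dgm_M = \dgm_N$, whence $\rk_M = \rk_N$ as functions on $\Int(\R^d)$). Fix any $\epsilon > \dhat(\Ical,\Jcal)$, so there is an $\epsilon$-correspondence $\Rcal$ between $\Ical$ and $\Jcal$; I must upgrade $\Rcal$ to witness feasibility for $\hatdero$, i.e. check that for every $(I,J)\in\Rcal$ and every $\delta\geq 0$ we have $\rk_N(J^{\epsilon+\delta})\leq \rk_M(I^\delta)$ and $\rk_M(I^{\epsilon+\delta})\leq \rk_N(J^\delta)$. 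Since $\rk_M=\rk_N$, these reduce to $\rk_M(J^{\epsilon+\delta})\leq \rk_M(I^\delta)$ and $\rk_M(I^{\epsilon+\delta})\leq \rk_M(J^\delta)$. The key point is that the $\epsilon$-correspondence gives $J\subseteq I^\epsilon$ and $I\subseteq J^\epsilon$; thickening both sides by $\delta$ yields $J^{\epsilon+\delta}\supseteq\;?$ — here I must be careful with the direction of containment, and this is where monotonicity enters. Concretely, from $I\subseteq J^\epsilon$ I get $I^\delta \subseteq J^{\epsilon+\delta}$, and monotonicity (Remark \ref{rem:monotinocity} \ref{item:monotonicity}), which reverses containment to rank inequality, gives $\rk_M(J^{\epsilon+\delta})\leq \rk_M(I^\delta)$; symmetrically $J\subseteq I^\epsilon$ gives $J^\delta\subseteq I^{\epsilon+\delta}$ and hence $\rk_M(I^{\epsilon+\delta})\leq \rk_M(J^\delta)$. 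Thus every $\epsilon > \dhat(\Ical,\Jcal)$ is feasible for $\hatdero$, and taking the infimum gives $\hatdero \leq \dhat(\Ical,\Jcal)$. Combined with ``$\geq$'', this establishes the equality \eqref{eq:eros-intleav}. The only delicate points to check carefully are the thickening identity $(I^\delta)^\epsilon = I^{\epsilon+\delta}$ (true for the supremum-metric thickening) and the correct orientation of monotonicity; neither is deep, but getting the containment directions right is essential.
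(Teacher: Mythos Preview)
Your proposal is correct and follows essentially the same approach as the paper: item \ref{item:dEhat_is_less_than_dhat} is immediate from the definitions, item \ref{item:hat_generalizes_dE} uses the diagonal correspondence $\Rcal=\{(I,I):I\in\Ical\}$ together with closure under thickening to apply the erosion inequalities to $I^\delta$, and item \ref{item:eros-intleav} upgrades an arbitrary $\epsilon$-correspondence via the containments $I^\delta\subseteq J^{\epsilon+\delta}$, $J^\delta\subseteq I^{\epsilon+\delta}$ and monotonicity of $\rk_M=\rk_N$. The only cosmetic difference is that the paper goes straight to the diagonal in \ref{item:hat_generalizes_dE} rather than first contemplating $\{(I,I^\epsilon)\}$, but you arrive at the same place.
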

We remark that the inequality given in \Cref{item:dEhat_is_less_than_dhat} can be strict. For instance, let $d=1$, $\Ical = \Jcal=\Int(\R^d)$, $M = 0$,  and $N=k_{[0,1)}$. Then, $0=\dhat(\Ical,\Jcal)<\hatdero((\dgm_M,\Ical),(\dgm_N,\Jcal))$.

By \Cref{eros=intleav} \ref{item:hat_generalizes_dE} and the prior stability result of $\dero$ \cite[Theorem H]{clause2022discriminating}, we have:
\begin{corollary}\label{cor:stability}
    For any $M,N:\R^d\rightarrow \vect$ and for any $\Ical\subset \Int(\R^d)$, we have 
    \[\hatdero((\dgm_M,\Ical),(\dgm_N,\Ical))\leq \dint(M,N),\] 
    where the right-hand side is the interleaving distance between $M$ and $N$.
\end{corollary}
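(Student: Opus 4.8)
The plan is to chain together the two facts the paper has already isolated. The target inequality
\[
\hatdero((\dgm_M,\Ical),(\dgm_N,\Ical))\leq \dint(M,N)
\]
concerns the \emph{same} interval set $\Ical$ on both sides, but $\Ical$ is an \emph{arbitrary} nonempty subset of $\Int(\R^d)$ — in particular it need not be closed under thickening. The strategy is to pass through a larger, better-behaved domain. First I would introduce the thickening closure $\overline{\Ical}:=\{I^\epsilon: I\in\Ical,\ \epsilon\in\R_{\geq 0}\}$, which is by construction closed under thickening and contains $\Ical$. On $\overline{\Ical}$ the ordinary erosion distance $\dero$ is defined, and the prior stability result \cite[Theorem H]{clause2022discriminating} applies, giving
\[
\dero(\dgm^{\overline{\Ical}}_M,\dgm^{\overline{\Ical}}_N)\leq \dint(M,N).
\]

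Next I would use \Cref{eros=intleav}~\ref{item:hat_generalizes_dE}, applied with the domain $\overline{\Ical}$ (which \emph{is} closed under thickening, so the hypothesis is met), to obtain
\[
\hatdero((\dgm_M,\overline{\Ical}),(\dgm_N,\overline{\Ical}))\leq \dero(\dgm^{\overline{\Ical}}_M,\dgm^{\overline{\Ical}}_N).
\]
Combining the two displayed inequalities already yields the bound for the enlarged domain $\overline{\Ical}$. The remaining task is to descend from $\overline{\Ical}$ back to $\Ical$, i.e.\ to argue
\[
\hatdero((\dgm_M,\Ical),(\dgm_N,\Ical))\leq \hatdero((\dgm_M,\overline{\Ical}),(\dgm_N,\overline{\Ical})).
\]

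The descent step is where I expect the main work to lie, and it is essentially a monotonicity-in-the-domain property of $\hatdero$. The intuition is that restricting to the smaller domain $\Ical\subset\overline{\Ical}$ should only make the correspondence condition easier to satisfy: given an $\epsilon$-correspondence $\Rcal\subset\overline{\Ical}\times\overline{\Ical}$ witnessing the right-hand distance, I would build an $\epsilon$-correspondence $\Rcal'\subset\Ical\times\Ical$. For each $I\in\Ical\subseteq\overline{\Ical}$ one can locate a partner already present in the correspondence and, using that every element of $\overline{\Ical}$ is a thickening $J_0^{\delta}$ of some $J_0\in\Ical$, replace thickened partners by their un-thickened generators inside $\Ical$, at the cost of absorbing $\delta$ into the $\epsilon+\delta$ slack already built into \Cref{def:hat_erosion}. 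The rank inequalities $\rk_N(J^{\epsilon+\delta})\leq \rk_M(I^\delta)$ transfer because the thickenings $I^\delta$ and the nested quantifier over all $\delta\in\R_{\geq 0}$ make the conditions stable under passing to generators; here I would lean on the monotonicity of $\rk_M$ (Remark~\ref{rem:monotinocity}~\ref{item:monotonicity}) to compare the relevant thickened intervals. Alternatively, one may realise the descent purely through the definition of $\hatdero$ by noting that the correspondence and rank conditions for $\Ical$ are a sub-family of those for $\overline{\Ical}$, so any $\epsilon$ admissible for the larger domain is admissible for the smaller one, whence the infimum can only decrease. Care is needed to confirm that the surjectivity-on-both-sides requirement of a correspondence survives the restriction; this bookkeeping, rather than any deep idea, is the crux. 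Once the descent inequality is in hand, transitivity of the chain of three inequalities finishes the proof.
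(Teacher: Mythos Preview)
Your overall plan---pass to the thickening closure $\overline{\Ical}$, invoke Theorem~H there, then return to $\Ical$---is exactly what the paper's one-line justification tacitly requires, and you are right to flag that \Cref{eros=intleav}~\ref{item:hat_generalizes_dE} alone only handles domains that are already closed under thickening. The issue is in how you execute the return step.

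Your descent inequality
\[
\hatdero((\dgm_M,\Ical),(\dgm_N,\Ical))\leq \hatdero((\dgm_M,\overline{\Ical}),(\dgm_N,\overline{\Ical}))
\]
is stronger than what is needed, and your sketched argument for it does not go through. If $(I,J)\in\Rcal$ with $I\in\Ical$ and $J=J_0^{\mu}\in\overline{\Ical}$, then $I\subset J^{\epsilon}=J_0^{\mu+\epsilon}$ does \emph{not} yield $I\subset J_0^{\epsilon}$ unless $\mu=0$; the parameter $\mu$ cannot be ``absorbed into the $\epsilon+\delta$ slack'' because $\delta$ in \Cref{def:hat_erosion} is universally quantified, not free to be chosen. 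Your alternative phrasing (``the conditions for $\Ical$ are a sub-family of those for $\overline{\Ical}$'') is also not literally correct, since a correspondence on $\overline{\Ical}\times\overline{\Ical}$ may pair elements of $\Ical$ with elements outside $\Ical$.

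The fix is to skip the intermediate quantity $\hatdero$ on $\overline{\Ical}$ entirely and argue directly, essentially rerunning the proof of \Cref{eros=intleav}~\ref{item:hat_generalizes_dE} with the erosion bound taken on $\overline{\Ical}$ rather than on $\Ical$. For any $\epsilon>\dint(M,N)$, Theorem~H on $\overline{\Ical}$ gives $\rk_N(K^\epsilon)\leq\rk_M(K)$ and $\rk_M(K^\epsilon)\leq\rk_N(K)$ for every $K\in\overline{\Ical}$. Now take the diagonal $\Rcal:=\{(I,I):I\in\Ical\}$, which is a $0$-correspondence on $\Ical\times\Ical$. For any $I\in\Ical$ and $\delta\geq 0$ one has $I^\delta\in\overline{\Ical}$, so applying the erosion bound with $K=I^\delta$ gives precisely $\rk_N(I^{\epsilon+\delta})\leq\rk_M(I^\delta)$ and $\rk_M(I^{\epsilon+\delta})\leq\rk_N(I^\delta)$. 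Hence $\hatdero((\dgm_M,\Ical),(\dgm_N,\Ical))\leq\epsilon$ for every $\epsilon>\dint(M,N)$, and you are done. No replacement of partners, no surjectivity bookkeeping.
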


\subsection{Closed-form formula for the sparse erosion distance} 
In this section, we find a closed-form formula for the right-hand side of Equation \eqref{eq:eros-intleav}, when each interval in $\Ical$ and $\Jcal$ has only finitely many minimal and maximal points (Theorem \ref{thm_distance_btw_collections_intvs}). This result is essential when studying our loss function in \Cref{sec:vectorization} and optimizing the domains of the GPDs in \Cref{sec:expes}.

For $p,q \in \N^\ast$, an interval $I \in \Int(\R^2)$ is a $(p,q)$-interval if $I$ has exactly $p$ minimal points and exactly $q$ maximal points. For any interval $I \in \Int(\R^2)$, let $\min (I)$ (resp. $\max (I)$) be the set of all minimal (resp. maximal) elements of $I$.   For $(x, y) \in \R^2$, define
\begin{align} \label{eq:delta}
    \delta(x, y) &:= \left\{ \begin{array}{lcl} 1 & \mbox{if} & x \leq y \\ 0 & \mbox{if} & x > y. \end{array}\right.
\end{align}

\begin{lemma} \label{lem_epsrs} 
    For $p_r,q_r,p'_s,q'_s\in \N^\ast$, let $I_r,J_s\in \Int(\R^2)$ be $(p_r, q_r)$- and $(p'_s, q'_s)$-intervals, respectively. Then, for any $\epsilon \in \R_{\geq0}$, $J_s \subset I^{\epsilon}_r$ and $I_r \subset J^{\epsilon}_s$ if and only if \[
    \epsilon \geq \max\left(\max_k a_k, \max_l b_l, \max_i a'_i, \max_j b'_j\right), \ \  \mbox{where} \]
    \begin{itemize}
        \item  $i,j,k,l$ range from $1$ to $p_r,q_r,p'_s,q'_s$ respectively; $a_k$ and $b_l$ are defined as (\ref{eq_a}) and (\ref{eq_b}) respectively; 
        $a_i'$ and $b_j'$ are obtained by interchanging the roles of $i$ and $k$ in (\ref{eq_a}), and those of $j$ and $l$ in (\ref{eq_b}), respectively;
        \item 
         $ \min(I_r) =: \{(x^r_i, y^r_i) : 1 \leq i \leq p_r\},\quad
        \min(J_s) =: \{(x^s_k, y^s_k) : 1 \leq k \leq p'_s\},\\
        \max(I_r) =: \{(X^r_j, Y^r_j) : 1 \leq j \leq q_r\},\quad 
        \max(J_s) =: \{(X^s_l, Y^s_l) : 1 \leq l \leq q'_s\}.$
    \end{itemize}
    \begin{align}
        & \min_i \biggl( \max \Bigl( (1-\delta(x^r_i, x^s_k))|x^s_k - x^r_i|, (1-\delta(y^r_i, y^s_k))|y^s_k - y^r_i| \Bigl) \biggl) \label{eq_a} \\
        & \min_j \biggl( \max \Bigl( \delta(X^r_j, X^s_l)|X^s_l - X^r_j|, \delta(Y^r_j, Y^s_l)|Y^s_l - Y^r_j| \Bigl) \biggl) \label{eq_b} 
    \end{align}
\end{lemma}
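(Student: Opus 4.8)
The plan is to reduce the two geometric containments $J_s \subset I_r^{\epsilon}$ and $I_r \subset J_s^{\epsilon}$ to purely order-theoretic conditions on the extremal points, and then to translate these conditions into the $\delta$-based closed form. The starting point is a membership characterization for planar intervals: for any $I\in\Int(\R^2)$ with finitely many extremal points and any $p\in\R^2$, one has $p\in I$ if and only if there exist $m\in\min(I)$ and $M\in\max(I)$ with $m\leq p\leq M$. The ``if'' direction is immediate from order-convexity (condition \ref{item:convexity} in the definition of an interval), while the ``only if'' direction holds because every point of $I$ dominates some minimal point and is dominated by some maximal point, using the finiteness of $\min(I)$ and $\max(I)$.

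Next I would describe the extremal points of a thickening. Writing $\epsilon=(\epsilon,\epsilon)$, so that $B_{\epsilon}^\square(p)=\{q:p-\epsilon\leq q\leq p+\epsilon\}$, every $q\in I^{\epsilon}$ satisfies $q\geq p-\epsilon\geq m-\epsilon$ for some $p\in I$ and some $m\in\min(I)$ with $m\leq p$, and conversely each $m-\epsilon$ lies in $I^{\epsilon}$. Since $\{m-\epsilon:m\in\min(I)\}$ is a rigid $\epsilon$-shift of the antichain $\min(I)$, and hence itself an antichain, this yields $\min(I^{\epsilon})=\{m-\epsilon:m\in\min(I)\}$; dually $\max(I^{\epsilon})=\{M+\epsilon:M\in\max(I)\}$. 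As $I^{\epsilon}$ is again an interval, the membership characterization above applies to it as well.

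These two facts give a containment criterion: for $J,K\in\Int(\R^2)$, one has $J\subseteq K$ if and only if (a) every $m\in\min(J)$ satisfies $m\geq m'$ for some $m'\in\min(K)$, and (b) every $M\in\max(J)$ satisfies $M\leq M'$ for some $M'\in\max(K)$. The forward implication is obtained by applying the membership characterization of $K$ to the extremal points of $J$ (which lie in $K$). The converse follows because any $p\in J$ is sandwiched as $m\leq p\leq M$ by extremal points of $J$, which are in turn sandwiched by extremal points of $K$ as $m'\leq m\leq p\leq M\leq M'$, placing $p$ in $K$ by the membership characterization; note this automatically handles any ``staircase'' shape of $K$, with no separate argument needed.

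Applying the criterion to $J_s\subset I_r^{\epsilon}$ and substituting $\min(I_r^{\epsilon})=\{(x_i^r-\epsilon,\,y_i^r-\epsilon)\}$ and $\max(I_r^{\epsilon})=\{(X_j^r+\epsilon,\,Y_j^r+\epsilon)\}$, condition (a) reads: for each $(x_k^s,y_k^s)\in\min(J_s)$ there is an $i$ with $x_i^r-\epsilon\leq x_k^s$ and $y_i^r-\epsilon\leq y_k^s$, i.e.\ $\max\!\big((x_i^r-x_k^s)_+,(y_i^r-y_k^s)_+\big)\leq\epsilon$, where $(t)_+:=\max(t,0)$; the passage to positive parts is valid precisely because $\epsilon\geq 0$. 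Since $(x_i^r-x_k^s)_+=(1-\delta(x_i^r,x_k^s))|x_k^s-x_i^r|$, this is exactly $a_k\leq\epsilon$ with $a_k$ as in \eqref{eq_a}, and condition (b) becomes $b_l\leq\epsilon$ as in \eqref{eq_b}. Swapping the roles of $I_r$ and $J_s$ (equivalently $i\leftrightarrow k$ and $j\leftrightarrow l$) treats $I_r\subset J_s^{\epsilon}$ and yields $a_i'\leq\epsilon$ and $b_j'\leq\epsilon$; taking the conjunction of all four families gives the stated bound. The routine part is this final $\delta$/positive-part bookkeeping, which only uses $\epsilon\geq 0$; the step requiring the most care is the combination of the second and third paragraphs, namely verifying that the thickening introduces no spurious extremal points and that interval containment genuinely reduces to extremal-point coverage.
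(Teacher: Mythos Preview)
Your proposal is correct and follows the same line as the paper's proof: reduce $J_s\subset I_r^\epsilon$ (and its symmetric counterpart) to coverage conditions on extremal points, then rewrite each coordinate inequality in the $\delta$/positive-part form. The paper simply asserts the key containment criterion (your third paragraph) as a fact and proceeds directly to the $\delta$-translation, whereas you additionally justify it via the membership characterization and the identification of $\min(I^\epsilon)$ and $\max(I^\epsilon)$; apart from this extra detail the two arguments coincide.
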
    
\emph{(See \Cref{A:lem_epsrs} for the proof.)}
Let $\Ical := \{I_r\}^n_{r=1}$ and $\Jcal := \{J_s\}^m_{s=1}$  be sets of intervals of $\R^2$ with only finitely many minimal and maximal points. Let $\Ecal$ the ($n\times m$)-matrix $(\epsilon_{rs})$ where $\epsilon_{rs}$ is the RHS of the inequality given in \Cref{lem_epsrs}, i.e.
\begin{equation} \label{eq:epsilons}
    \epsilon_{rs} = \max(\max_k a_k, \max_l b_l, \max_i a'_i, \max_j b'_j)
\end{equation}
and $i,j,k,l, a_k, b_l, a'_i, b'_j$ as defined in \Cref{lem_epsrs}. Next, we show that $\dhat(\Ical,\Jcal)$, as defined in \Cref{def_dhat}, equals the largest of the smallest elements across all rows and columns of $\Ecal$.
In particular, the following variables and functions are useful for describing the closed form formula for $\dhat(\Ical,\Jcal)$, when each of $\Ical$ and $\Jcal$ consists solely of $(1,1)$- or $(2,1)$-intervals.

When $I_r$ and $J_s$ are $(2, 1)$-intervals, as depicted in Figure \ref{fig:two (2, 1)-intervals}, let
\begin{align*}
    x_1 &:= x^r_2, & y_1 &:= y^r_1, & a &:= Y^r_1 - y^r_1, & b &:= x^r_2 - x^r_1,\\
    c &:= y^r_1 - y^r_2, & d &:= X^r_1 - x^r_2, & x_2 &:= x^s_2, & y_2 &:= y^s_1,\\
    e &:= Y^s_1 - y^s_1, & f &:= x^s_2 - x^s_1, & g &:= y^s_1 - y^s_2, & h &:= X^s_1 - x^s_2.
\end{align*}

\begin{figure}[h]
    \centering
    \includegraphics[width=0.5\linewidth]{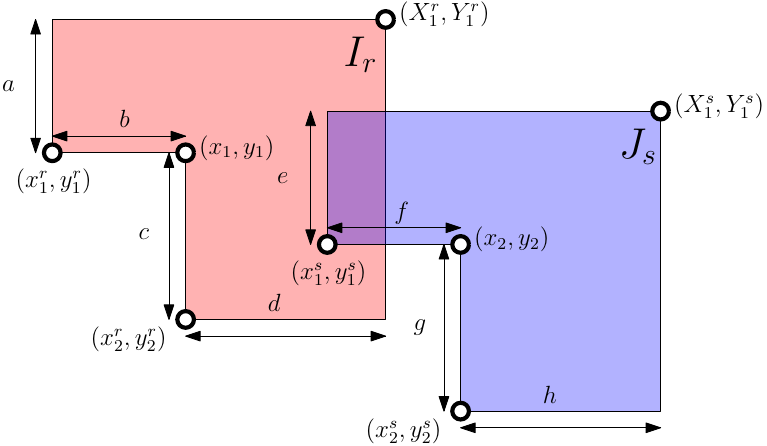}
    \caption{The parametrization of $I_r$ and $J_s$}
    \label{fig:two (2, 1)-intervals}
\end{figure}
When $I_r$ (resp. $J_s$) is a $(1, 1)$-interval, set $x^r_1 = x^r_2$, $y^r_1 = y^r_2$ and thus $b = c = 0$ (resp. $x^s_1 = x^s_2$, $y^s_1 = y^s_2$ and thus $f = g = 0$). Also, let
\begin{align*}
    F(w_1, w_2, w_3, w_4) &:= \max \bigl( \delta(w_1, w_2)|w_2 - w_1|, \delta(w_3, w_4)|w_4 - w_3| \bigl),\\
    G(m_1, m_2, m_3, m_4, m_5) &:= \min(F(m_1, m_2, m_3, m_4), m_5),\\
    H(o_1, o_2, o_3, o_4) &:= \max \bigl( \min(o_1, o_2), \min(o_3, o_4) \bigl),
\end{align*}
where all input variables of $F, G$ and $H$ are real numbers.
\begin{theorem} \label{thm_distance_btw_collections_intvs}
    Let $\Ical := \{I_r\}^n_{r=1}$ and $\Jcal := \{J_s\}^m_{s=1}$ be sets of intervals of $\R^2$ with only finitely many minimal and maximal points.
    \begin{enumerate}[label=(\roman*)]
    \item  We have $\dhat(\mathcal{I}, \mathcal{J}) = \max \Bigl(\max_r(\min_s \epsilon_{rs}), \max_s(\min_r \epsilon_{rs}) \Bigl)$ where $\epsilon_{rs}$ is as defined in \Cref{eq:epsilons} and $I_r$ (resp. $J_s$) is a $(p_r,q_r)$-interval (resp. $(p'_s,q'_s)$-interval). \label{item:between_p,q_intervals}
    \item If $\Ical$ and $\Jcal$ consist solely of $(1,1)$- or $(2,1)$-intervals, then $\epsilon_{rs}$ is the maximum of (\ref{eq_(2, 1)_a})-(\ref{eq_(2, 1)_b'})\label{item:between_2,1_intervals}.
    \end{enumerate} 
\end{theorem}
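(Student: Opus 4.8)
The plan is to treat the two parts separately: part \ref{item:between_p,q_intervals} follows from \Cref{lem_epsrs} by reducing the existence of an $\epsilon$-correspondence to a combinatorial covering condition on the matrix $\Ecal=(\epsilon_{rs})$, while part \ref{item:between_2,1_intervals} is obtained by substituting the explicit coordinates of $(1,1)$- and $(2,1)$-intervals into the general expression \eqref{eq:epsilons} and simplifying.

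For part \ref{item:between_p,q_intervals}, the key input supplied by \Cref{lem_epsrs} is that the mutual-containment condition $J_s\subset I_r^\epsilon$ and $I_r\subset J_s^\epsilon$ holds if and only if $\epsilon\geq\epsilon_{rs}$. Hence a subset $\mathcal{R}\subset\Ical\times\Jcal$ is an $\epsilon$-correspondence in the sense of \Cref{def:hat_erosion} precisely when $\mathcal{R}$ is a correspondence every pair $(I_r,J_s)$ of which satisfies $\epsilon_{rs}\leq\epsilon$. Because a correspondence must match each $I_r$ to some $J_s$ and each $J_s$ to some $I_r$, such an $\mathcal{R}$ exists if and only if every row and every column of $\Ecal$ contains an entry that is $\leq\epsilon$, that is, if and only if $\epsilon\geq\max_r\min_s\epsilon_{rs}$ and $\epsilon\geq\max_s\min_r\epsilon_{rs}$. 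I would prove the nonobvious implication by the explicit nearest-neighbor construction: for each $r$ select an $s$ minimizing $\epsilon_{rs}$, for each $s$ select an $r$ minimizing $\epsilon_{rs}$, and take the union of the chosen pairs; this is a correspondence all of whose entries are at most $\epsilon^\ast:=\max(\max_r\min_s\epsilon_{rs},\max_s\min_r\epsilon_{rs})$. Taking the infimum in \eqref{def_dhat} then gives $\dhat(\Ical,\Jcal)=\epsilon^\ast$. The one point that needs care is the attainment of the infimum: since \Cref{lem_epsrs} uses closed thickenings and hence states the non-strict inequality $\epsilon\geq\epsilon_{rs}$, the set of admissible $\epsilon$ is $\{\epsilon>0:\epsilon\geq\epsilon^\ast\}$, whose infimum is exactly $\epsilon^\ast$.

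For part \ref{item:between_2,1_intervals}, I would specialize \eqref{eq:epsilons} to $p_r,p'_s\in\{1,2\}$ and $q_r=q'_s=1$, using the coordinate names $x_1,y_1,a,b,c,d$ and $x_2,y_2,e,f,g,h$ introduced before the statement together with the degenerate conventions $b=c=0$ (resp. $f=g=0$) for a $(1,1)$-interval. A $(2,1)$-interval has two minimal points and one maximal point, so the four aggregate quantities $\max_k a_k$, $\max_l b_l$, $\max_i a'_i$, $\max_j b'_j$ expand into a short fixed list: the minimal-point contributions $a_1,a_2$ (and their primed counterparts $a'_1,a'_2$), each an inner minimum $\min_i$ over the two minimal points, together with the single maximal-point contributions $b_1$ and $b'_1$. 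The auxiliary functions $F,G,H$ are tailored to repackage these: $F$ encodes the per-point quantity $\max(\delta(\cdot)|\cdot|,\delta(\cdot)|\cdot|)$ of \eqref{eq_b}, $G$ takes the minimum of such a quantity against a competing option, and $H$ realizes the outer maximum of the two inner minima coming from the two minimal points. The proof is then a direct substitution verifying that each of (\ref{eq_(2, 1)_a})--(\ref{eq_(2, 1)_b'}) equals one of these expanded terms and that $\epsilon_{rs}$ is their maximum, exactly as asserted.

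I expect the main obstacle to lie in the bookkeeping of part \ref{item:between_2,1_intervals}. Each minimal-point contribution carries an inner minimum $\min_i$ over the two minimal points of the ambient interval, so one must track which minimal point realizes the minimum and verify that the indicators $1-\delta$ and $\delta$ switch off the contribution in precisely those coordinate directions where containment is automatic. By contrast, part \ref{item:between_p,q_intervals} is conceptually routine once \Cref{lem_epsrs} is invoked, the only subtlety being the attainment of the infimum noted above.
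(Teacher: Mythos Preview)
Your proposal is correct and follows essentially the same route as the paper: part \ref{item:between_p,q_intervals} is reduced via \Cref{lem_epsrs} to the row/column covering condition on $\Ecal$ (the paper leaves the nearest-neighbor construction implicit, but your explicit version is the same argument), and part \ref{item:between_2,1_intervals} is the direct substitution you describe, using $1-\delta(x,y)=\delta(y,x)$ to rewrite the minimal-point terms in terms of $F$. One small inaccuracy: the function $G$ is defined in the paper but does not actually appear in \eqref{eq_(2, 1)_a}--\eqref{eq_(2, 1)_b'}, so it plays no role in the verification.
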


\begin{align}
    &H \Bigl( F(x_2 - f, x_1 - b, y_2, y_1), F(x_2-f, x_1, y_2, y_1-c), \nonumber \\
    &\ \ \ \ \ \ \ \ \ \ \ \ \ \ \ \ \ \ \ \ F(x_2, x_1-b, y_2-g, y_1), F(x_2, x_1, y_2-g, y_1-c)\Bigl) \label{eq_(2, 1)_a} \\
    &F(x_1+d, x_2+h, y_1+a, y_2+e) \label{eq_(2, 1)_b} \\
    &H \Bigl( F(x_1 - b, x_2 - f, y_1, y_2), F(x_1-b, x_2, y_1, y_2-g), \nonumber \\
    &\ \ \ \ \ \ \ \ \ \ \ \ \ \ \ \ \ \ \ \ F(x_1, x_2-f, y_1-c, y_2), F(x_1, x_2, y_1-c, y_2-g)\Bigl) \label{eq_(2, 1)_a'}\\
    &F(x_2+h, x_1+d, y_2+e, y_1+a) \label{eq_(2, 1)_b'}
\end{align}
\emph{(See Appendix \ref{A:pf_(2, 1)} for the proof.)}

\begin{remark} \label{rmk:computational_difficulty_when_pq_gets_larger} 
    In \Cref{thm_distance_btw_collections_intvs}~\ref{item:between_p,q_intervals}, as $p_r$, $q_r$, $p'_s$, and $q'_s$ increase, the complexity of computing $\epsilon_{rs}$ increases. This is because, the ranges over which the maxima on the RHS of \Cref{eq:epsilons} increase, thereby increasing the computational complexity of $\dhat(\Ical, \Jcal)$ as well.
\end{remark}

\section{Lipschitz stability and differentiability of the loss function}\label{sec:vectorization}
The goal of this section is to establish Lipchitz continuity/stability and (almost everywhere) differentiability of our loss function (cf. \Cref{eq:practical_loss}). From Figure \ref{fig:embedding}, recall that we denote the embedding of a $(1,1)$-or $(2,1)$-interval $I$ of $\R^2$ into $\mathbb{R}^6$  by $\mathbf{v}_I$.

\begin{remark}[Independence of variables]\label{rmk:convex_vectorization_for_larger_pq}
Although there are multiple ways to embed $(p, q)$-intervals of $\R^2$ into Euclidean space, for any $p,q\in \N$, 
some vectorization methods are more efficient than others in our context.
For instance, suppose that we represent the $(2,1)$-interval $I$ in Figure \ref{fig:embedding} by simply concatenating the coordinates of its minimal and maximal points. Namely, we represent $I$ with the vector  $(x_1,y_1,x_2,y_2,X,Y)\in \R^6$, where $x_1=x-b$, $y_1=y$, $x_2=x$, $y_2=y-c$, $X=x+d$, $Y=y+a$.
A difficulty with this embedding in the context of gradient descent is that the variables are \emph{not} independent: one must have $(x_1,y_1)\leq (X,Y)$, $(x_2,y_2)\leq (X,Y)$,
and $(x_2,y_2)\not\leq (x_1,y_1)$. Ensuring such relations between variables is not trivial in non-convex optimization (without using refined techniques such as projected gradient descent).
On the other hand, using our proposed embedding is more practical, as all variables are independent, and our only requirements are that $a,b,c,d$ must be nonnegative, which can easily be
imposed using, e.g., exponential or ReLU functions.
\end{remark}
Let $\Ical := \{I_r\}^n_{r=1}$ and $\Jcal := \{J_s\}^m_{s=1}$ consist solely of $(1,1)$- or $(2,1)$-intervals of $\R^2$. 
\begin{lemma} \label{epsilon_IJ}
    For $I_r \in \Ical$ and $J_s \in \Jcal$, consider $\mathbf{v}_{I_r}=(x_1, y_1, a, b, c, d)$ and $\mathbf{v}_{J_s}=(x_2, y_2, e, f, g, h)$ defined as in Figure \ref{fig:embedding}.
    If $||\mathbf{v}_{I_r} - \mathbf{v}_{J_s}||_\infty \leq \epsilon$, then $\epsilon_{rs} \leq 2 \epsilon$ where $\epsilon_{rs}$ is described as in \Cref{thm_distance_btw_collections_intvs}~\ref{item:between_2,1_intervals}. (See \Cref{A:epsilon_IJ} for the proof.)
\end{lemma}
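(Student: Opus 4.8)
=== PROOF PROPOSAL ===

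The plan is to bound $\epsilon_{rs}$ directly from the hypothesis $\|\mathbf{v}_{I_r} - \mathbf{v}_{J_s}\|_\infty \leq \epsilon$ by controlling each of the four quantities \eqref{eq_(2, 1)_a}--\eqref{eq_(2, 1)_b'} whose maximum equals $\epsilon_{rs}$ (by \Cref{thm_distance_btw_collections_intvs}~\ref{item:between_2,1_intervals}). Writing $\mathbf{v}_{I_r} = (x_1,y_1,a,b,c,d)$ and $\mathbf{v}_{J_s} = (x_2,y_2,e,f,g,h)$, the assumption says each of $|x_1-x_2|, |y_1-y_2|, |a-e|, |b-f|, |c-g|, |d-h|$ is at most $\epsilon$. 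The key observation is that every argument appearing inside the $F$ terms in \eqref{eq_(2, 1)_a}--\eqref{eq_(2, 1)_b'} is a sum of at most two of the six coordinates (e.g. $x_2 - f$, $x_1 - b$, $y_1 + a$, $x_2 + h$), so the difference between a coordinate-expression coming from $I_r$ and its counterpart coming from $J_s$ is a sum of at most two of the controlled differences, hence bounded by $2\epsilon$.

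First I would record the elementary Lipschitz facts needed about the auxiliary functions: since $F(w_1,w_2,w_3,w_4) = \max(\delta(w_1,w_2)|w_2-w_1|, \delta(w_3,w_4)|w_4-w_3|)$ is built from absolute values and maxima, and since $|w_2-w_1|$ can only be made smaller by the $\delta$ indicator, each $F$ term is bounded above by the larger of its two absolute-value arguments $|w_2 - w_1|$ and $|w_4 - w_3|$. Likewise $\min$ and $\max$ (hence $H$) preserve upper bounds. So it suffices to show each individual absolute-value argument inside each $F$ is at most $2\epsilon$.

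Next I would bound the two ``easy'' terms \eqref{eq_(2, 1)_b} and \eqref{eq_(2, 1)_b'}, which involve the maximal-point data. For instance in \eqref{eq_(2, 1)_b} the arguments are $x_1+d$ versus $x_2+h$ and $y_1+a$ versus $y_2+e$; the differences are $(x_1-x_2)+(d-h)$ and $(y_1-y_2)+(a-e)$, each a sum of two controlled quantities, hence of absolute value at most $2\epsilon$, so $F \leq 2\epsilon$; \eqref{eq_(2, 1)_b'} is identical by symmetry. For the two $H$-terms \eqref{eq_(2, 1)_a} and \eqref{eq_(2, 1)_a'}, I would observe that each of the four constituent $F$ terms compares expressions such as $x_2 - f$ against $x_1 - b$, or $y_2$ against $y_1$, etc.; in every case the pairwise difference is again a sum of at most two of the six controlled coordinate differences (e.g. $(x_2-x_1)+(b-f)$, or simply $y_2-y_1$), hence bounded by $2\epsilon$. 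Since $H$ is a max-of-mins of such $F$ values, it too is at most $2\epsilon$. Taking the maximum over all four expressions gives $\epsilon_{rs} \leq 2\epsilon$.

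The main obstacle, though entirely routine once set up, is the bookkeeping: I must handle the $(1,1)$ degenerate case uniformly (where $b=c=0$ for $I_r$ and $f=g=0$ for $J_s$, so those terms simply vanish and the bounds only improve), and I must be careful that the $\delta$ indicators in $F$ can switch the sign comparison between $I_r$ and $J_s$ — but this only ever replaces an argument by $0$, which cannot increase the value, so the $2\epsilon$ bound is preserved. The essential content is simply that our embedding $\mathbf{v}$ is bi-Lipschitz-compatible with $\epsilon_{rs}$ because each geometric coordinate of an interval is an affine function (a sum of at most two entries) of the embedding vector.
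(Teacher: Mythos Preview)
Your overall plan is sound, but the claim that ``in every case the pairwise difference is again a sum of at most two of the six controlled coordinate differences'' is false for the two \emph{middle} $F$ terms inside \eqref{eq_(2, 1)_a} (and symmetrically inside \eqref{eq_(2, 1)_a'}). Look at the second term, $F(x_2-f,\, x_1,\, y_2,\, y_1-c)$: the first pair of arguments is $x_2-f$ versus $x_1$, and their difference $(x_1-x_2)+f$ contains $f$ itself, not $b-f$; likewise the second pair's difference contains $c$ alone. The hypothesis only controls $|b-f|$ and $|c-g|$, not $f$ or $c$ individually. Concretely, if $\mathbf v_{I_r}=\mathbf v_{J_s}$ (so $\epsilon=0$) with $b=f$ large, then this $F$ term equals $b$, which is certainly not $\leq 2\epsilon$. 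The third $F$ term fails the same way.

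The repair is to use the $\min$ structure of $H$ rather than bounding all four $F$ terms. Since $H(o_1,o_2,o_3,o_4)=\max\bigl(\min(o_1,o_2),\min(o_3,o_4)\bigr)$, it suffices to show $o_1\leq 2\epsilon$ and $o_4\leq 2\epsilon$. These are precisely the two $F$ terms in which the coordinates are \emph{properly matched} --- $x_2-f$ against $x_1-b$ and $y_2$ against $y_1$ in $o_1$; $x_2$ against $x_1$ and $y_2-g$ against $y_1-c$ in $o_4$ --- and for those your triangle-inequality argument goes through verbatim. This is exactly how the paper's proof proceeds.
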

We define $\mathbb{v}_{\Ical}$ as the concatenation of $\mathbf{v}_{I_r}$ for all $I_r \in \mathcal{I}$, i.e., $\mathbb{v}_{\Ical} := (\mathbf{v}_{I_1}|\dots|\mathbf{v}_{I_n}) \in \R^{6n}$.
Let $\mathfrak{I}$ be the collection of all ordered $n$-sets $\Ical$ of $(1, 1)$-or $(2, 1)$-intervals of $\R^2$. Then, define the function $V: \mathfrak{I} \rightarrow \R^{6n}$ by $\Ical\mapsto \mathbb{v}_{\Ical}$.
\begin{proposition}\label{prop:convexity}
    The image of $\mathfrak{I}$ via the function $V$ is a convex subset of $\R^{6n}$.
\end{proposition}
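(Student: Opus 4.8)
The plan is to show that the image $V(\mathfrak{I})\subset \R^{6n}$ is exactly the set of vectors whose nonnegativity constraints are satisfied, and that this constraint set is an intersection of halfspaces, hence convex. Recall from Figure \ref{fig:embedding} that each $(1,1)$- or $(2,1)$-interval $I_r$ is embedded as $\mathbf{v}_{I_r}=(x,y,a,b,c,d)\in\R^6$, where $(x,y)$ is the coordinate of the designated middle point and $a,b,c,d$ are the four nonnegative lengths defining the interval's lower boundary (with $b=c=0$ precisely when $I_r$ is a $(1,1)$-interval). The key structural observation, emphasized in \Cref{rmk:convex_vectorization_for_larger_pq}, is that in this parametrization the variables are \emph{independent}: the only conditions required for a vector $(x,y,a,b,c,d)$ to correspond to a genuine interval are $a,b,c,d\geq 0$, with no coupling inequalities among the coordinates of the minimal and maximal points (unlike the naive embedding by concatenating vertex coordinates).

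First I would make this correspondence precise. Define
\begin{equation*}
    \Dcal_1 := \{(x,y,a,b,c,d)\in\R^6 : a\geq 0,\ b\geq 0,\ c\geq 0,\ d\geq 0\}.
\end{equation*}
I claim $\mathbf{v}(\cdot)$ establishes a bijection between the set of all $(1,1)$- or $(2,1)$-intervals and $\Dcal_1$. For the forward direction, any such interval yields nonnegative lengths by construction. For surjectivity, given any $(x,y,a,b,c,d)\in\Dcal_1$, one reconstructs the minimal points $(x-b,y)$ and $(x,y-c)$ and the maximal point $(x+d,y+a)$ (collapsing to a $(1,1)$-interval when $b=c=0$), and verifies that the resulting region is a legitimate $(2,1)$- or $(1,1)$-interval of $\R^2$ — this amounts to checking connectivity and convexity in the poset sense, which holds automatically because $a,b,c,d\geq 0$ guarantees the single maximal point dominates both minimal points. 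Then I would conclude that the image of a single coordinate block is exactly $\Dcal_1$.

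Having identified the image of one interval's coordinates as $\Dcal_1$, I would pass to the $n$-fold product. Since $V(\Ical)=(\mathbf{v}_{I_1}|\cdots|\mathbf{v}_{I_n})$ and each block ranges independently over $\Dcal_1$ as $\Ical$ ranges over $\mathfrak{I}$, the image is the Cartesian product $V(\mathfrak{I})=\Dcal_1^{\,n}\subset\R^{6n}$. Finally, $\Dcal_1$ is convex, being the intersection of the four halfspaces $\{a\geq 0\},\{b\geq 0\},\{c\geq 0\},\{d\geq 0\}$; and a finite product of convex sets is convex (convexity is tested coordinatewise along any segment). Hence $V(\mathfrak{I})$ is convex.

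The only genuinely non-routine step is the surjectivity/reconstruction claim in the first paragraph: one must confirm that \emph{every} vector in $\Dcal_1$ arises from an actual interval, i.e. that imposing only $a,b,c,d\geq 0$ never produces a degenerate or disconnected region violating the interval axioms. I expect this to be the main obstacle, though a mild one — it reduces to a short geometric verification that the axis-parallel staircase region bounded below by the two minimal points and above by the single maximal point is always convex and connected in the product order on $\R^2$. Everything after this (the product structure and convexity of a halfspace intersection) is immediate.
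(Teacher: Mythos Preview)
Your proposal is correct and takes essentially the same approach as the paper's own proof: identify the image of the single-interval embedding $I\mapsto\mathbf{v}_I$ as a convex subset of $\R^6$, observe that $V(\mathfrak{I})$ is the $n$-fold Cartesian product of that set, and conclude via the convexity of products of convex sets. The only cosmetic difference is in the description of the single-interval image---the paper writes it as $\R\times\R\times\R_{>0}\times\R_{\geq 0}\times\R_{\geq 0}\times\R_{>0}$ (strict positivity for $a,d$) rather than your $\Dcal_1=\R^2\times\R_{\geq 0}^4$---but either set is convex and the remainder of the argument is identical.
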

\begin{proof}
    Let $S$ be the collection of all $(1,1)$- and all $(2, 1)$-intervals of $\R^2$. Consider the map from $S$ to $\R\times \R \times \R_{> 0} \times \R_{\geq 0}\times \R_{\geq 0} \times \R_{> 0}$ depicted in Figure \ref{fig:embedding}. Clearly, this map is a surjection.
    It follows that the image of $S$ via the map $I \mapsto (x,y,a,b,c,d)$ depicted in Figure \ref{fig:embedding} is a convex subset of $\R^6$. Now, observe that the image of $\mathfrak{I}$ via the map $V$ is equal to the $n$-fold Cartesian product of the image of $S$, which is a subset of $\R^{6n}$. The fact that any Cartesian product of convex sets is convex implies our claim.
\end{proof}

\begin{theorem}[Lipschitz stability of loss function] \label{thm:de-vectorization_stability} 
    Let $\Kcal$ be any finite set of $(1,1)$- or $(2,1)$-intervals, and let $\Jcal_1, \Jcal_2$ be any two $n$-sets of $(1,1)$- or $(2,1)$-intervals for some $n\in \N^\ast$.
    Then, we have $|\dhat(\Kcal,\Jcal_1)-\dhat(\Kcal,\Jcal_{2})| \leq 2 \cdot \min_{\pi} ||\mathbb{v}_{\pi(\Jcal_1)} - \mathbb{v}_{\Jcal_2}||_\infty,$ where the minimum is taken over all permutations on $\Jcal_1$.
\end{theorem}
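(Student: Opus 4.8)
The plan is to reduce the claimed Lipschitz bound to the fixed-permutation case and then combine the per-interval distortion estimate of \Cref{epsilon_IJ} with the closed-form description of $\dhat$ from \Cref{thm_distance_btw_collections_intvs}~\ref{item:between_p,q_intervals}. First I would fix an arbitrary permutation $\pi$ on the index set of $\Jcal_1$ and show that $|\dhat(\Kcal,\Jcal_1)-\dhat(\Kcal,\Jcal_2)| \leq 2\,\|\mathbb{v}_{\pi(\Jcal_1)}-\mathbb{v}_{\Jcal_2}\|_\infty$. Since reindexing $\Jcal_1$ by $\pi$ does not change the \emph{set} $\Jcal_1$ (and hence leaves $\dhat(\Kcal,\Jcal_1)$ unchanged, as $\dhat$ depends only on the unordered collections via its definition through $\epsilon$-correspondences), establishing the bound for every individual $\pi$ and then minimizing over $\pi$ yields the statement. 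Thus the real content is a single triangle-type inequality for a fixed matching between the intervals of $\Jcal_1$ and those of $\Jcal_2$.

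For that fixed matching, I would write $\Kcal=\{K_t\}$, and use \Cref{thm_distance_btw_collections_intvs}~\ref{item:between_p,q_intervals} to express both $\dhat(\Kcal,\Jcal_1)$ and $\dhat(\Kcal,\Jcal_2)$ as $\max\bigl(\max_t(\min_s \epsilon^{(1)}_{ts}),\ \max_s(\min_t \epsilon^{(1)}_{ts})\bigr)$ and the analogous expression with $\epsilon^{(2)}_{ts}$, where $\epsilon^{(\ell)}_{ts}$ is the entry from \Cref{eq:epsilons} between $K_t$ and the $s$-th interval of $\Jcal_\ell$. The key pointwise estimate is that, for each pair $(t,s)$, $|\epsilon^{(1)}_{ts}-\epsilon^{(2)}_{ts}| \leq 2\,\|\mathbf{v}_{J^1_s}-\mathbf{v}_{J^2_s}\|_\infty$. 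This is exactly the kind of bound produced by \Cref{epsilon_IJ} once one notes that $\epsilon_{ts}$ is built from $F$, $G$, $H$ — i.e. from compositions of $\max$, $\min$, $\delta(\cdot,\cdot)|\cdot-\cdot|$ — all of which are $1$-Lipschitz in the relevant coordinates, and the factor $2$ arises because $K_t$ is held fixed while each coordinate of the matched $J$-interval can move by at most $\epsilon:=\|\mathbb{v}_{\pi(\Jcal_1)}-\mathbb{v}_{\Jcal_2}\|_\infty$.

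The final step is to propagate this entrywise perturbation bound through the $\max$/$\min$ structure. Here I would invoke the standard fact that $\min$ and $\max$ are $1$-Lipschitz with respect to the sup-norm of their arguments: if $|\epsilon^{(1)}_{ts}-\epsilon^{(2)}_{ts}|\leq 2\epsilon$ for all $t,s$, then $|\min_s \epsilon^{(1)}_{ts}-\min_s \epsilon^{(2)}_{ts}|\leq 2\epsilon$ for each $t$, and likewise the subsequent $\max_t$ and the outer $\max$ preserve the bound $2\epsilon$. Combining these yields $|\dhat(\Kcal,\Jcal_1)-\dhat(\Kcal,\Jcal_2)|\leq 2\epsilon = 2\,\|\mathbb{v}_{\pi(\Jcal_1)}-\mathbb{v}_{\Jcal_2}\|_\infty$ for the fixed $\pi$, and minimizing over $\pi$ completes the argument.

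I expect the main obstacle to be the entrywise estimate $|\epsilon^{(1)}_{ts}-\epsilon^{(2)}_{ts}|\leq 2\,\|\mathbf{v}_{J^1_s}-\mathbf{v}_{J^2_s}\|_\infty$, because $\epsilon_{ts}$ involves the indicator-weighted terms $\delta(\cdot,\cdot)|\cdot-\cdot|$, whose Lipschitz behavior must be checked carefully across the discontinuity of $\delta$. The clean way around this is to rely on \Cref{epsilon_IJ}, which already packages the per-pair distortion (it gives $\epsilon_{rs}\leq 2\epsilon$ when $\|\mathbf{v}_{I_r}-\mathbf{v}_{J_s}\|_\infty\leq\epsilon$); I would either apply it symmetrically to both $\Jcal_1$ and $\Jcal_2$ relative to $\Kcal$, or extract from its proof the stronger statement that each $\epsilon_{ts}$ is itself $2$-Lipschitz in the $J$-coordinates. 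Once that pointwise bound is in hand, everything downstream is a routine application of the $1$-Lipschitz property of $\min$/$\max$, so the delicacy is entirely concentrated in the behavior of the $\delta$-weighted terms.
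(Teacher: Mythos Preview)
Your approach would work, but it takes a longer route than the paper's and leans on a lemma you do not yet have. The paper's proof bypasses the entrywise analysis entirely: since $\dhat$ is an extended pseudometric (it equals $\hatdero$ restricted to pairs with the same module, by \Cref{eros=intleav}~\ref{item:eros-intleav}, and $\hatdero$ is a pseudometric by \Cref{prop:pseudo}), the triangle inequality gives $|\dhat(\Kcal,\Jcal_1)-\dhat(\Kcal,\Jcal_2)|\leq \dhat(\Jcal_1,\Jcal_2)$ at once. Then one only has to bound $\dhat(\Jcal_1,\Jcal_2)$, and here \Cref{epsilon_IJ} applies \emph{verbatim}: for the optimal permutation $\pi_0$, the diagonal entries of the $\epsilon$-matrix between $\pi_0(\Jcal_1)$ and $\Jcal_2$ satisfy $\epsilon_{ss}\leq 2\epsilon$, so every row- and column-minimum is at most $2\epsilon$, and \Cref{thm_distance_btw_collections_intvs}~\ref{item:between_p,q_intervals} finishes. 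No Lipschitz property of the individual $\epsilon_{ts}$ in the $J$-coordinates is ever needed.

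By contrast, your argument hinges on the claim $|\epsilon^{(1)}_{ts}-\epsilon^{(2)}_{ts}|\leq 2\,\|\mathbf{v}_{J^1_s}-\mathbf{v}_{J^2_s}\|_\infty$ with $K_t$ fixed. That claim is true (the building blocks $\delta(u,v)|v-u|=\max(v-u,0)$, $\max$, $\min$ are all $1$-Lipschitz, and the $J$-coordinates enter the formulas \eqref{eq_(2, 1)_a}--\eqref{eq_(2, 1)_b'} through combinations like $x_2-f$, $x_2+h$, $y_2-g$, $y_2+e$ that can move by $2\epsilon$), but it is \emph{not} what \Cref{epsilon_IJ} says: that lemma bounds $\epsilon_{rs}$ itself when the two intervals are close, not the variation of $\epsilon_{ts}$ as one interval is perturbed. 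Your ``apply it symmetrically'' option does not bridge this gap; you really would have to redo the case analysis of \Cref{epsilon_IJ} to extract the $2$-Lipschitz statement. So your plan is sound but costs you an extra lemma that the triangle-inequality route avoids completely.
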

\begin{proof}
     By the triangle inequality, we have
     $|\dhat(\Kcal,\Jcal_1)-\dhat(\Kcal,\Jcal_{2})|\leq \dhat(\Jcal_1,\Jcal_2)$ and thus it suffices to show that $\dhat(\Jcal_1,\Jcal_2) \leq  2 \cdot \min_{\pi} ||\mathbb{v}_{\pi(\Jcal_1)} - \mathbb{v}_{\Jcal_2}||_\infty$. 
     Let $\pi_0$ be a permutation on $\Jcal_1$ that attains the minimum. Let $\Ical:=\pi_0(\Jcal_1)$ and $\Jcal:=\Jcal_2$.  By \Cref{epsilon_IJ}, $\epsilon_{rr}$ (resp. $\epsilon_{ss}$) $\leq 2\epsilon$ for all $r, s \in \{1,\dots,n\}$ and hence $\min_s \epsilon_{rs}$ (resp. $\min_r \epsilon_{rs}$) $\leq 2\epsilon$ for each $r$ (resp. $s$). Thus, we have $\max_r (\min_s \epsilon_{rs}) \leq 2\epsilon \mbox{ and } \max_s(\min_r \epsilon_{rs}) \leq 2\epsilon.$ By \Cref{thm_distance_btw_collections_intvs}~\ref{item:between_p,q_intervals}, we are done.    
\end{proof}

\begin{remark} \label{rmk:vectorization_unstable}
    The opposite direction of \Cref{thm:de-vectorization_stability} does not hold, i.e. there does not exist $c > 0$ such that $|\dhat(\Kcal,\Jcal_1)-\dhat(\Kcal,\Jcal_{2})| \geq c\cdot\min_{\pi}||\mathbb{v}_{\pi(\Jcal_1)} - \mathbb{v}_{\Jcal_2}||_\infty.$
    However, the non-existence of such $c > 0$ is \emph{not} an issue in our work as what we require is the Lipschitz stability and almost everywhere differentiability of the loss function (which we establish in the next proposition), in order to prevent erratic and oscillating gradient descent iterations.
\end{remark}

\begin{proposition} \label{prop:differentiable} 
     The loss function given in Equation (\ref{eq:practical_loss}) is differentiable almost everywhere.
\end{proposition}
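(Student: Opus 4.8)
The plan is to reduce the claim to showing that the map $\mathbb{v}_\Jcal \mapsto \dhat(\Ical,\Jcal)$ is differentiable almost everywhere on its domain $\Dcal\subset\R^{6m}$: since the loss function in \Cref{eq:practical_loss} is exactly this map multiplied by the constant $t$, scaling by $t$ preserves differentiability, so it suffices to treat $\dhat(\Ical,\cdot)$. I would fix $\Ical$ once and for all (so that the coordinates of $\mathbb{v}_\Ical$ are constants) and regard $\dhat(\Ical,\Jcal)$ as a function of the $6m$ free coordinates of $\mathbb{v}_\Jcal=(\mathbf{v}_{J_1}|\dots|\mathbf{v}_{J_m})$.

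Next I would unwind the closed-form formula of \Cref{thm_distance_btw_collections_intvs}~\ref{item:between_2,1_intervals}. There we have $\dhat(\Ical,\Jcal)=\max\bigl(\max_r\min_s\epsilon_{rs},\ \max_s\min_r\epsilon_{rs}\bigr)$, and each $\epsilon_{rs}$ is the maximum of the expressions (\ref{eq_(2, 1)_a})--(\ref{eq_(2, 1)_b'}), which are themselves assembled from the auxiliary functions $F,G,H$. The key observation, and the one I would emphasize, is that the seemingly discontinuous building block $\delta(w_1,w_2)|w_2-w_1|$ is in fact the positive part $(w_2-w_1)^+=\max(w_2-w_1,0)$: when $w_1\le w_2$ it equals $w_2-w_1\ge 0$, and when $w_1>w_2$ it equals $0$, which agrees with $(w_2-w_1)^+$. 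Consequently, despite the appearance of the indicator $\delta$, each $F$ (hence each $H$, each $\epsilon_{rs}$, and finally $\dhat(\Ical,\Jcal)$) is a finite composition of the operations $\max$, $\min$, and $(\cdot)^+$ applied to affine functions of the coordinates of $\mathbb{v}_\Jcal$.

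From here I would conclude in either of two equivalent ways. Piecewise-linear route: each of $\max$, $\min$, and $(\cdot)^+$ is continuous and piecewise linear, and any finite composition of continuous piecewise-linear maps with affine maps is again continuous and piecewise linear with finitely many linear regions; such a function is affine on the interior of each region, while the complement of these interiors lies in a finite union of hyperplanes and hence has Lebesgue measure zero in $\R^{6m}$. Rademacher route: taking $\Kcal=\Ical$ and the identity permutation in \Cref{thm:de-vectorization_stability} gives $|\dhat(\Ical,\Jcal_1)-\dhat(\Ical,\Jcal_2)|\le 2\,\|\mathbb{v}_{\Jcal_1}-\mathbb{v}_{\Jcal_2}\|_\infty$, so $\dhat(\Ical,\cdot)$ is globally Lipschitz with respect to $\|\cdot\|_\infty$ on $\Dcal$, and Rademacher's theorem yields differentiability almost everywhere.

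The only genuine subtlety, and the step I would be most careful about, is the identification of $\delta(w_1,w_2)|w_2-w_1|$ with the positive part $(w_2-w_1)^+$. This is precisely what rules out the discontinuities one might otherwise fear from the indicator $\delta$, and it guarantees that the full expression for $\dhat(\Ical,\Jcal)$ stays within the continuous piecewise-linear (equivalently, locally Lipschitz) class. Everything else is routine bookkeeping, namely that finite compositions, and finite $\max$/$\min$, of functions in this class remain in the class.
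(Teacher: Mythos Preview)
Your proposal is correct, and your piecewise-linear route is precisely the paper's argument: the paper observes that $(x,y)\mapsto\delta(x,y)\,|y-x|$, $\max$, and $\min$ are finitely segmented piecewise linear, hence so are the $\epsilon_{rs}$ and $\dhat(\Ical,\cdot)$, yielding almost-everywhere differentiability. Your explicit identification $\delta(w_1,w_2)|w_2-w_1|=(w_2-w_1)^+$ is a clean way to justify this first step, and your Rademacher alternative via the Lipschitz bound of \Cref{thm:de-vectorization_stability} is a valid shortcut the paper does not take; it trades the elementary piecewise-linear bookkeeping for an appeal to a deeper theorem.
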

\begin{proof} 
    The three maps $\R^2\rightarrow \R$ given by $(x,y)\mapsto \delta(x, y)|y-x|$ (cf. Equation (\ref{eq:delta})), $(x,y)\mapsto \max\{x,y\}$, and $(x,y)\mapsto \min\{x,y\}$ are finitely segmented piecewise linear.
    Therefore, the functions given in Equations (\ref{eq_(2, 1)_a})--(\ref{eq_(2, 1)_b'}) are all finitely segmented piecewise linear on Euclidean spaces, and thus so are $\epsilon_{rs}$ given in \Cref{thm_distance_btw_collections_intvs}~\ref{item:between_2,1_intervals}. Hence, trivially, these functions are differentiable almost everywhere. Now, \Cref{eros=intleav} \ref{item:eros-intleav} implies our claim.
\end{proof}

\section{Numerical Experiments on Sparsification} \label{sec:expes}
In this section, we make use of the results proved above
to provide a method for {\em sparsifying} GPDs.
Indeed, computing a single GPD $\dgm^\Ical_M$ on a persistence module $M$ coming from a simplicial complex $S$ 
requires computing zigzag persistence modules on all intervals of $\Ical$, as described in~\cite{dey2022computing},
and then computing the corresponding M\"obius inversion, yielding a time complexity of $O(n^3)$, where $n$ is the number of intervals in $\Ical$, due to the need to compute the M\"obius function value for all possible pairs of intervals, each requiring $O(n)$ operations to iterate over each interval. Thus, the complexity of computing a single GPD is $O(n\times N_s^{2.376} + n^3)$, where 
$N_s$ is the number of simplices \cite{milosavljevic2011zigzag}. 
Hence, computing all GPDs from a dataset of persistence modules becomes intractable when $n$ is large.

Therefore, as described in the introduction, our goal in 
this section is to design a {\em sparse} subset of $(2,1)$-intervals $\Jcal^*$ of size $m\ll n$ that minimizes 
the loss function in~\Cref{eq:practical_loss}
with gradient descent by treating every interval in $\Jcal$ as a parameter to optimize. 

\paragraph{Time-series datasets.} 
The datasets we consider are taken from the UCR repository~\cite{Dau2018}, and correspond to classification tasks that have already been studied 
with persistent homology before~\cite[Section 4.2]{loiseaux2024stable}. More precisely, instances in these datasets take the form of labelled time series, that we pre-process with time-delay embedding
in order to turn them into point clouds. Specifically, each labelled time series $T=\{f(t_1),\dots,f(t_n)\}$ of length $n$ is transformed into a point cloud $X_T\subset\mathbb{R}^3$ of cardinality $n-2$ with 
$$X_T:=\{(f(t_1),f(t_2),f(t_3))^T,\dots,(f(t_{n-2}),f(t_{n-1}),f(t_{n}))^T\}.$$

Then, we compute both the Vietoris-Rips filtration and the sublevel set filtration induced by a Gaussian kernel density estimator 
(with bandwidth $\sigma=0.1 \cdot {\rm diam}(X_T)$, where ${\rm diam}(X_T):=\max_{x,y\in X_T}\|x-y\|_2$ is the diameter of the point cloud)
using the \texttt{PointCloud2FilteredComplex} function of the \texttt{multipers}
library~\cite{Loiseaux2024}.
\footnote{See the tutorial available at \url{https://davidlapous.github.io/multipers/notebooks/time_series_classification.html} for a 
detailed description of the procedure.} 
Both filtrations are then normalized so that their ranges become equal to the unit interval $[0,1]$.

\paragraph{Loss function.} 
In order to compute GPDs out of these 2-parameter filtrations and modules, one needs a subset of intervals.
As explained in the introduction, we then minimize 
$$\Lcal_{\hatdero,m}: \mathbb{v}_\Jcal \mapsto \dhat(\Ical, \Jcal),$$

where the full domain $\Ical$ (resp. the sparse domain $\Jcal$) is comprised of $n=1,600$ (resp. $m=400$) $(2,1)$-intervals obtained from a grid in $\mathbb{R}^6$ 
computed by evenly sampling $10$ (resp. $5$) values for $x$ and $y$ and $2$ values for $a,b,c,d$ within their corresponding filtration ranges.
Note that while the $(2,1)$-intervals in $\mathcal J$ are treated as parameters to optimize, the $(2,1)$-intervals in $\mathcal I$ are fixed throughout the optimization process.
Moreover, the formula that we provided in \Cref{thm_distance_btw_collections_intvs}~\ref{item:between_2,1_intervals} can be readily implemented in any library that uses auto-differentiation, such as $\texttt{pytorch}$. 
In particular, we run stochastic gradient descent with momentum $0.9$ on $\mathcal L$ for $750$ epochs using learning rate $\eta=0.001$ with exponential decay of factor $0.99$ to achieve convergence,
and obtain our sparse subset $\mathcal J^*$. See Figure~\ref{fig:loss_optim} for a visualization of the loss decrease. Note how the Lipschitz stability proved in \Cref{thm:de-vectorization_stability}
translates into a smooth decrease with small oscillations. 

\begin{figure}[h]
  \begin{center}
    \includegraphics[width=.5\textwidth]{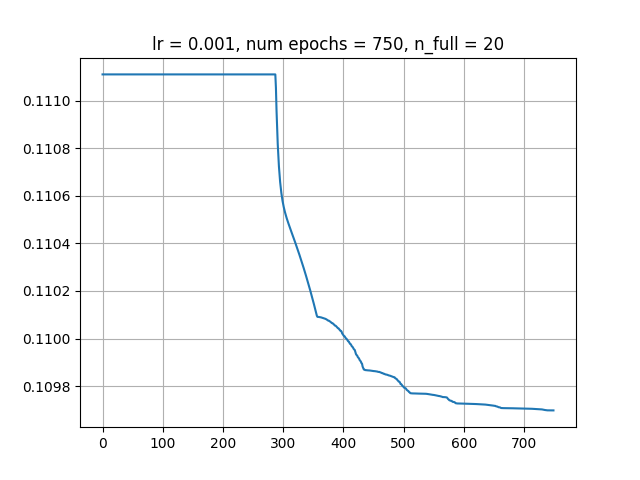}
  \end{center}
  \caption{\label{fig:loss_optim} Loss decrease across gradient descent iterations. One can see that the loss value stays on a plateau for the first $\sim$300 iterations; this is due to the fact that during these first iterations,
the parameters in $\Jcal$ that are updated with gradient descent are not yet the ones achieving the maxima and minima in the closed-form formula provided in \Cref{thm_distance_btw_collections_intvs}~\ref{item:between_2,1_intervals}.}
\end{figure}

\subparagraph*{Accuracy scores.} 
In order to quantify the running time improvements as well as the information loss (if any) when switching from the full domain $\Ical$
to the optimized sparse one $\Jcal^*$, we computed the full and sparse GPDs in homology dimensions $0$ and $1$ with the zigzag persistence diagram 
implementation in \texttt{dionysus},\footnote{\url{https://mrzv.org/software/dionysus2/}} and then 
we trained random forest classifiers on these GPDs to predict the time series labels.  
In order to achieve this, we first turned both the full and optimized GPDs into Euclidean vectors by first binning every GPD (seen as a point cloud in $\mathbb{R}^6$) with a fixed $6$-dimensional histogram,
and then convolving this histogram with a $6$-dimensional Gaussian kernel in order to smooth its values, with a procedure similar to the one described in~\cite[Section 3.2.1]{loiseaux2024stable}. 
Both the histogram bins and the kernel bandwidths were found with $3$-fold cross-validation on the training set (see the provided code for 
hyperparameter values). Then, random forest classifiers were trained on these smoothed histograms to predict labels; 
in Table~\ref{table_scores} we report the accuracy scores of these classifiers for the initial (before optimization) sparse domain $\Jcal_{\rm init}$, the optimized sparse domain $\Jcal^*$, and 
the full domain $\Ical$. Moreover, in Table~\ref{table_times}, we report the running time needed to compute all GPDs using either $\Jcal_{\rm init}$, $\Jcal^*$, or $\Ical$, as well as the improvement
when switching from $\Jcal^*$ to $\Ical$. Note that our goal is not to improve on the state-of-the-art for time series classification, but rather to assess
whether optimizing the loss $\Lcal_{\hatdero, m}$ based on our upper bound is indeed beneficial for improving topology-based models. See Figure~\ref{fig:pipeline} for a schematic overview of our full pipeline.

\subparagraph*{Discussion on results.} 
As one can see from Table~\ref{table_scores}, there is either a clear improvement or a comparable performance in accuracy scores after optimizing $\Jcal$. Indeed, by minimizing $\Lcal_{\hatdero,m}$, one forces the 
sparse domain $\Jcal$ to be as close as possible to the full domain $\Ical$, and thus to retain as much topological information as possible. 
Hence, either the full GPDs are more efficient than the initial sparse GPDs, in which case the optimized GPDs perform much better than the initial sparse ones,
or the full GPDs are less efficient than the initial sparse GPDs, 
\footnote{Recall that the accuracy score is only an \emph{indirect} measure: while the full GPDs are \emph{always} richer in topological information, their scores might still be
lower due to, e.g., many empty or redundant intervals.} 
in which case the optimized GPDs have comparable or slightly worse performances than the initial sparse ones 
thanks to the small sizes of their domains
(except for the PC and IPD datasets, for which the optimized GPDs still perform interestingly better).
In all cases, we emphasize that optimized GPDs provide the best solution: 
\emph{they maintain scores at levels that are either comparable or better than the best solution between the initial sparse and full GPDs,
while avoiding to force users to choose interval domains} (as their domains are obtained automatically with gradient descent).

As for running times, we observe a slight increase from the running times associated to the initial sparse GPDs (except for the IPD dataset), which 
is due to the use of optimized intervals that are richer in topological information than the initial ones, and a strong improvement
over the running times associated to the full GPDs, with ratios ranging between $5$ and $15$ times faster.
\emph{Our optimized GPDs thus achieve the best of both worlds: they are strinkingly fast to compute while keeping high accuracy scores.}
Our code was run on a \texttt{2x Xeon SP Gold 5115 @ 2.40GHz}, and is fully available at \codelink.

\begin{table}[h]
\begin{tabular}{l|ccccccccc}
\hline 
  & C & DPA & DPC & DPT & PPA & PPC & PPT & ECG & IPD \\ 
\hline 
Init. & 0.750 & \underline{\textbf{0.705}} & \underline{\textbf{0.746}} & \underline{\textbf{0.561}} & \underline{\textbf{0.790}} & 0.718 & 0.693 & \underline{\textbf{0.790}} & 0.677 \\
Optim. & \underline{0.786} & 0.691 & 0.721 & \underline{\textbf{0.561}} & 0.785 & \underline{\textbf{0.742}} & \underline{\textbf{0.737}} & \underline{\textbf{0.790}} & \underline{\textbf{0.690}} \\
Full & \textbf{0.857} & 0.669 & 0.743 & 0.554 & 0.780 & 0.729 & 0.707 & 0.740 & 0.651 \\
\hline
\hline
& MI & P & SL & GP & GPA & GPM & GPO & PC & SC \\
\hline
Init. & 0.534 & \underline{\textbf{0.924}} & \underline{\textbf{0.565}} & \underline{\textbf{0.900}} & 0.835 & 0.946 & \underline{0.987} & 0.789 & 0.447 \\ 
Optim. & \underline{\textbf{0.588}} & 0.886 & 0.546 & 0.893 & \underline{\textbf{0.959}} & \underline{\textbf{0.959}} & 0.956 & \underline{\textbf{0.800}} & \underline{0.487} \\ 
Full & 0.545 & 0.838 & 0.531 & 0.847 & 0.864 & 0.946 & \textbf{0.990} & 0.778 & \textbf{0.503} \\ 
\hline
\end{tabular}

\caption{\label{table_scores} Accuracy scores ($\%$) of random forest classifiers trained on several UCR datasets. Underline indicates best score between the initial and optimized sparse domains, while bold 
font indicates best score overall. See Table~\ref{tab:acronyms} for the dataset full names.}
\end{table}

\begin{table}[h]
\begin{tabular}{l|ccccccccc}
\hline 
  & C & DPA & DPC & DPT & PPA & PPC & PPT & ECG & IPD \\ 
\hline 
Init. & \underline{\textbf{350}} & \underline{\textbf{1076}} & \underline{\textbf{1902}} & \underline{\textbf{1178}} & \underline{\textbf{1182}} & \underline{\textbf{1660}} & \underline{\textbf{1147}} & \underline{\textbf{556}} & \underline{\textbf{1262}} \\
Optim. & 421 & 1172 & 2054 & 1265 & 1257 & 1749 & 1232 & 621 & 1328 \\
Full & 2304 & 11672 & 20163 & 13007 & 12745 & 18427 & 12844 & 5098 & 19860 \\
\hline 
Improv. & 5.47x & 9.95x & 9.82x & 10.28x & 10.13x & 10.53x & 10.42x & 8.20x & 15.74x \\
\hline
\hline
 & MI & P & SL & GP & GPA & GPM & GPO & PC & SC \\
\hline
Init. & \underline{\textbf{2989}} & \underline{\textbf{792}} & \underline{\textbf{3478}} & \underline{\textbf{703}} & \underline{\textbf{1576}} & \underline{\textbf{1658}} & \underline{\textbf{1573}} & \underline{\textbf{1339}} & \underline{\textbf{1240}} \\ 
Optim. & 3399 & 909 & 3919 & 883 & 1955 & 2061 & 1925 & 1550 & 1324 \\ 
Full & 29074 & 6280 & 31285 & 5773 & 12913 & 13523 & 12849 & 10730 & 13089 \\ 
\hline 
Improv. & 8.55x & 6.91x & 7.98x & 6.54x & 6.60x & 6.56x & 6.67x & 6.92x & 9.88x \\ 
\hline
\end{tabular}

\caption{\label{table_times} Running times (seconds) needed for computing all GPDs on several UCR datasets. Underline indicates best running time between the initial and optimized sparse domains, while bold 
font indicates best running time overall. See Table~\ref{tab:acronyms} for the dataset full names.}
\end{table}

\section{Conclusion}\label{sec:conclusion}
Our sparsification method demonstrates the practicality of approximating full GPDs with sparse ones: while significantly reducing computational costs,
our appropriate loss function also ensures that their discriminative power is not too compromised.
We thus believe that our work paves the way for efficiently deploying multi-parameter topological data analysis to large-scale applications, that are currently out of reach 
for most multi-parameter topological invariants from the literature. In what follows, we outline several potential future directions. 

\subparagraph*{Optimization for finer GPDs.} 
In our experiments, we optimized the GPDs over intervals with at most two minimal points and exactly one maximal point. 
Allowing more complex intervals, as well as dataset-dependent terms in the loss function, could improve performance, but finding suitable embeddings of such intervals into Euclidean space remains a challenge 
(cf. Remark \ref{rmk:convex_vectorization_for_larger_pq}) and would increase the computational cost (cf. Remark \ref{rmk:computational_difficulty_when_pq_gets_larger}).
It would also be interesting to quantify the discriminating power of GPDs with measures that are more direct than the score of a machine learning classifier (or at least to further study the dependencies between GPD sparsification and classifier scores), and to investigate on heuristics (based on drops in the loss values) for deciding whether sparse GPDs are sufficiently good so that optimization can be stopped. 

\subparagraph*{Experimental validation for other GRI-based descriptors.} While we only focused on sparsifying GPDs in this work, it would be interesting
to measure the extent to which our interval domain optimization adapts to other descriptors based on the GRI from the TDA literature; 
of particular interest are the GPDs/signed barcodes coming from \emph{rank exact decompositions}, which have recently proved to be stable~\cite{Botnan2024} (see also Section 8.2 in~\cite{botnan2021signed}
which discusses the influence of the choice of the interval domains on the resulting invariants), as well as GRIL, which focuses on specific intervals called \emph{worms}~\cite{xin2023gril}.

\bibliography{ref.bib}

\begin{thebibliography}{10}

\bibitem{asashiba2024interval}
Hideto Asashiba, Etienne Gauthier, and Enhao Liu.
\newblock Interval replacements of persistence modules.
\newblock {\em arXiv preprint arXiv:2403.08308}, 2024.

\bibitem{azumaya1950corrections}
Gor{\^o} Azumaya.
\newblock Corrections and supplementaries to my paper concerning {K}rull-{R}emak-{S}chmidt’s theorem.
\newblock {\em Nagoya Mathematical Journal}, 1:117--124, 1950.

\bibitem{bauer2020cotorsion}
Ulrich Bauer, Magnus Botnan, Steffen Oppermann, and Johan Steen.
\newblock Cotorsion torsion triples and the representation theory of filtered hierarchical clustering.
\newblock {\em Advances in Mathematics}, 369:107171, 2020.

\bibitem{botnan2020decomposition}
Magnus Botnan and William Crawley-Boevey.
\newblock Decomposition of persistence modules.
\newblock {\em Proceedings of the American Mathematical Society}, 148(11):4581--4596, 2020.

\bibitem{botnan2018algebraic}
Magnus Botnan and Michael Lesnick.
\newblock Algebraic stability of zigzag persistence modules.
\newblock {\em Algebraic \& {G}eometric topology}, 18(6):3133--3204, 2018.

\bibitem{botnan2022introduction}
Magnus Botnan and Michael Lesnick.
\newblock An introduction to multiparameter persistence.
\newblock In {\em Representations of Algebras and Related Structures}, pages 77--150. CoRR, 2023.

\bibitem{botnan2021signed}
Magnus Botnan, Steffen Oppermann, and Steve Oudot.
\newblock Signed barcodes for multi-parameter persistence via rank decompositions and rank-exact resolutions.
\newblock {\em Foundations of Computational Mathematics}, pages 1--60, 2024.

\bibitem{Botnan2024}
Magnus Botnan, Steffen Oppermann, Steve Oudot, and Luis Scoccola.
\newblock On the bottleneck stability of rank decompositions of multi-parameter persistence modules.
\newblock {\em Advances in Mathematics}, 451:109780, 2024.

\bibitem{carlsson2009theory}
Gunnar Carlsson and Afra Zomorodian.
\newblock The theory of multidimensional persistence.
\newblock {\em Discrete \& Computational Geometry}, 42(1):71--93, 2009.

\bibitem{Carriere2020b}
Mathieu Carri{\`{e}}re and Andrew Blumberg.
\newblock {Multiparameter persistence image for topological machine learning}.
\newblock In {\em Advances in Neural Information Processing Systems 33 (NeurIPS 2020)}, pages 22432--22444. Curran Associates, Inc., 2020.

\bibitem{chazal2009proximity}
Fr{\'e}d{\'e}ric Chazal, David Cohen-Steiner, Marc Glisse, Leonidas~J Guibas, and Steve~Y Oudot.
\newblock Proximity of persistence modules and their diagrams.
\newblock In {\em Proceedings of the twenty-fifth annual symposium on Computational geometry}, pages 237--246, 2009.

\bibitem{clause2022discriminating}
Nate Clause, Woojin Kim, and Facundo Memoli.
\newblock The generalized rank invariant: {M}\"obius invertibility, discriminating power, and connection to other invariants.
\newblock {\em arXiv preprint arXiv:2207.11591v5}, 2024.

\bibitem{Corbet2019}
Ren{\'{e}} Corbet, Ulderico Fugacci, Michael Kerber, Claudia Landi, and Bei Wang.
\newblock {A kernel for multi-parameter persistent homology}.
\newblock {\em Computers {\&} Graphics: X}, 2:100005, 2019.

\bibitem{Dau2018}
Hoang-Anh Dau, Anthony Bagnall, Kaveh Kamgar, Chin-Chia Yeh, Yan Zhu, Shaghayegh Gharghabi, Chotirat Ratanamahatana, and Eamonn Keogh.
\newblock {The UCR time series archive}.
\newblock In {\em CoRR}. arXiv:1810.07758, 2018.

\bibitem{dey2022computing}
Tamal Dey, Woojin Kim, and Facundo M{\'e}moli.
\newblock Computing generalized rank invariant for 2-parameter persistence modules via zigzag persistence and its applications.
\newblock In {\em 38th International Symposium on Computational Geometry (SoCG 2022)}. Schloss Dagstuhl-Leibniz-Zentrum f{\"u}r Informatik, 2022.

\bibitem{dey2024computing}
Tamal Dey, Aman Timalsina, and Cheng Xin.
\newblock Computing generalized ranks of persistence modules via unfolding to zigzag modules.
\newblock {\em arXiv preprint arXiv:2403.08110}, 2024.

\bibitem{edelsbrunner2008computational}
Herbert Edelsbrunner and John~L Harer.
\newblock {\em Computational topology: an introduction}.
\newblock American Mathematical Society, 2008.

\bibitem{kim2024superpoly}
Donghan Kim, Woojin Kim, and Wonjun Lee.
\newblock Super-polynomial growth of the generalized persistence diagram, 2024.
\newblock URL: \url{https://arxiv.org/abs/2412.04889}, \href {https://arxiv.org/abs/2412.04889} {\path{arXiv:2412.04889}}.

\bibitem{kim2021generalized}
Woojin Kim and Facundo M{\'e}moli.
\newblock Generalized persistence diagrams for persistence modules over posets.
\newblock {\em Journal of Applied and Computational Topology}, 5(4):533--581, 2021.

\bibitem{kim2021spatiotemporal}
Woojin Kim and Facundo M{\'e}moli.
\newblock Spatiotemporal persistent homology for dynamic metric spaces.
\newblock {\em Discrete \& Computational Geometry}, 66:831--875, 2021.

\bibitem{kim2023persistence}
Woojin Kim and Facundo M{\'e}moli.
\newblock Persistence over posets.
\newblock {\em Notices of the American Mathematical Society}, 70(08), 2023.

\bibitem{kim2024extracting}
Woojin Kim and Facundo M{\'e}moli.
\newblock Extracting persistent clusters in dynamic data via m{\"o}bius inversion.
\newblock {\em Discrete \& Computational Geometry}, 71(4):1276--1342, 2024.

\bibitem{kim2021bettis}
Woojin Kim and Samantha Moore.
\newblock Bigraded {B}etti numbers and generalized persistence diagrams.
\newblock {\em Journal of Applied and Computatioal Topology}, 2024.
\newblock \href {https://doi.org/10.1007/s41468-024-00180-x} {\path{doi:10.1007/s41468-024-00180-x}}.

\bibitem{lesnick2015theory}
Michael Lesnick.
\newblock The theory of the interleaving distance on multidimensional persistence modules.
\newblock {\em Foundations of Computational Mathematics}, 15(3):613--650, 2015.

\bibitem{Loiseaux2023b}
David Loiseaux, Mathieu Carri{\`{e}}re, and Andrew Blumberg.
\newblock {A framework for fast and stable representations of multiparameter persistent homology decompositions}.
\newblock In {\em Advances in Neural Information Processing Systems 36 (NeurIPS 2023)}. Curran Associates, Inc., 2023.

\bibitem{Loiseaux2024}
David Loiseaux and Hannah Schreiber.
\newblock multipers: Multiparameter persistence for machine learning.
\newblock {\em Journal of Open Source Software}, 9(103):6773, 2024.

\bibitem{loiseaux2024stable}
David Loiseaux, Luis Scoccola, Mathieu Carri{\`{e}}re, Magnus Botnan, and Steve Oudot.
\newblock {Stable vectorization of multiparameter persistent homology using signed barcodes as measures}.
\newblock In {\em Advances in Neural Information Processing Systems 36 (NeurIPS 2023)}. Curran Associates, Inc., 2023.

\bibitem{milosavljevic2011zigzag}
Nikola Milosavljevi{\'c}, Dmitriy Morozov, and Primoz Skraba.
\newblock Zigzag persistent homology in matrix multiplication time.
\newblock In {\em Proceedings of the twenty-seventh Annual Symposium on Computational Geometry}, pages 216--225, 2011.

\bibitem{Mukherjee2024}
Soham Mukherjee, Shreyas Samaga, Cheng Xin, Steve Oudot, and Tamal Dey.
\newblock D-gril: End-to-end topological learning with 2-parameter persistence.
\newblock In {\em CoRR}. arXiv:2406.07100, 2024.

\bibitem{patel2018generalized}
Amit Patel.
\newblock Generalized persistence diagrams.
\newblock {\em Journal of Applied and Computational Topology}, 1(3):397--419, 2018.

\bibitem{Scoccola2024}
Luis Scoccola, Siddharth Setlur, David Loiseaux, Mathieu Carri\`{e}re, and Steve Oudot.
\newblock Differentiability and optimization of multiparameter persistent homology.
\newblock In {\em 41st International Conference on Machine Learning (ICML 2024)}, volume 235, pages 43986--44011. PMLR, 2024.

\bibitem{Vipond2020}
Oliver Vipond.
\newblock {Multiparameter persistence landscapes}.
\newblock {\em Journal of Machine Learning Research}, 21(61):1--38, 2020.

\bibitem{xian2022capturing}
Lu~Xian, Henry Adams, Chad~M Topaz, and Lori Ziegelmeier.
\newblock Capturing dynamics of time-varying data via topology.
\newblock {\em Foundations of Data Science}, 4(1):1--36, 2022.

\bibitem{xin2023gril}
Cheng Xin, Soham Mukherjee, Shreyas Samaga, and Tamal Dey.
\newblock {GRIL: A 2-parameter persistence based vectorization for machine learning}.
\newblock In {\em 2nd Annual Workshop on Topology, Algebra, and Geometry in Machine Learning}. OpenReviews.net, 2023.

\end{thebibliography}


\appendix

\section{Missing details}

The following remark demonstrates that the bottleneck and Wasserstein distances  \cite{botnan2018algebraic,Botnan2024, loiseaux2024stable} 
are not appropriate candidates for $\dd$ given in  \Cref{eq:ideal_loss}.

\begin{remark}[Instability of the bottleneck distance w.r.t perturbations of the domains of the GPDs]\label{rmk:dB_and_dW_discontinuity} For $n\in \N^\ast$, let $M_n:\R^2\rightarrow \vect$ be the interval module $k_{[0,n]^2}$. For $\epsilon\in \R$, consider the singleton subset $\Ical_n+\epsilon:=\{[\epsilon,n+\epsilon]^2\}$ of $\Int(\R^2)$. Then, it is not difficult to see that  both $\dgm_M^{\Ical_n+\epsilon}$ and $\rk_M^{\Ical_n+\epsilon}$ are defined on the singleton set $\{[\epsilon,n+\epsilon]^2\}$ and \[\dgm_M^{\Ical_n+\epsilon}([\epsilon,n+\epsilon]^2)=\rk_M^{\Ical_n+\epsilon}([\epsilon,n+\epsilon]^2)=\begin{cases}1,&\mbox{if $\epsilon=0$}\\0,&\mbox{otherwise.}\end{cases}
\]
By Remark \ref{rem:monotinocity} \ref{item:dgm-generalizes-barc}, we identify $\dgm_M^{\Ical_n+\epsilon}$ with the set either $\{[0,n]^2\}$ (when $\epsilon=0$) or $\emptyset$ (when $\epsilon\neq 0$) 
(this identification is standard; e.g. \cite{botnan2021signed,kim2021generalized,Scoccola2024}).
Therefore, for any $\epsilon>0$, we have $\db\left(\dgm_M^{\Ical_n+0},\dgm_M^{\Ical_n+\epsilon}\right)=\db(\{[0,n]^2\},\emptyset)=n$
and $\dw{p}\left(\dgm_M^{\Ical_n+0},\dgm_M^{\Ical_n+\epsilon}\right)=\dw{p}(\{[0,n]^2\},\emptyset)=2^{\frac 1p}n$, which can be arbitrarily larger than $\epsilon$, as $n$ increases. 
\end{remark}

\subsection{Proof of \Cref{prop:pseudo}} \label{A:pseudo}
\begin{proof}
    We only prove that $\hatdero$ satisfies the triangle inequality. Let $M, N, O:\R^d\rightarrow \vect$ and let $\Ical, \Jcal, \mathcal{K} \subset \Int(\R^d)$. We show that $\hatdero(\rk^\Ical_M, \rk^\mathcal{K}_O) \leq \hatdero(\rk^\Ical_M, \rk^\Jcal_N) + \hatdero(\rk^\Jcal_N, \rk^\mathcal{K}_O).$
    Assume the following: There exists an $\epsilon_1$-correspondence $\mathcal{R}_1 \subset \mathcal{I} \times \mathcal{J}$ such that 
    \begin{equation} \label{item_eps1corr} 
        \forall (I,J)  \in \mathcal{R}_1,\ \forall\delta \in \R_{\geq 0},\ \ \ \rk_N(J^{\epsilon_1+\delta}) \leq \rk_M(I^\delta) \mbox{ and } \rk_M(I^{\epsilon_1+\delta}) \leq \rk_N(J^\delta).
    \end{equation}
    There exists an $\epsilon_2$-correspondence $\mathcal{R}_2 \subset \Jcal \times \mathcal{K}$ such that 
    \begin{equation}\label{item_eps2corr} 
        \forall (J,K)  \in \mathcal{R}_2,\ \forall\delta \in \R_{\geq 0},\ \ \   \rk_O(K^{\epsilon_2 + \delta}) \leq \rk_N(J^\delta) \mbox{ and } \rk_N(J^{\epsilon_2+\delta}) \leq \rk_O(K^\delta).
    \end{equation}
    Now, consider the following correspondence between $\Ical$ and $\mathcal{K}$:
    \begin{equation*}
        \Rcal_3 := \{(I, K)\in \Ical\times\mathcal{K} : \mbox{ there exists } J \in \Jcal \mbox{ such that } (I,J) \in \Rcal_1 \mbox{ and } (J,K) \in \Rcal_2 \}.
    \end{equation*}
    First, we show that $\Rcal_3$ is an $(\epsilon_1 + \epsilon_2)$-correspondence. Let $(I, K) \in \Rcal_3$. Then, there exists $J \in \Jcal$ such that  $(I,J) \in \Rcal_1 \mbox{ and } (J,K) \in \Rcal_2$. Since 
    $J \subset I^{\epsilon_1}$, we have 
    $J^{\epsilon_2} \subset (I^{\epsilon_1})^{\epsilon_2}= I^{\epsilon_1 + \epsilon_2}.$
    Also, since $K \subset J^{\epsilon_2}$, we have $K \subset I^{\epsilon_1 + \epsilon_2}.$
    Similarly, we can prove $I \subset K^{\epsilon_1 + \epsilon_2}$. This proves that $\Rcal_3$ is an $(\epsilon_1 + \epsilon_2)$-correspondence.  
    Second, by the conditions given in \Cref{item_eps1corr,item_eps2corr}, for all $\delta \in \R_{\geq 0}$, we have:
    \begin{equation*}
        \rk_O(K^{\epsilon_1 + \epsilon_2 + \delta}) \leq \rk_N(J^{\epsilon_1 + \delta}) \leq \rk_M(I^\delta) \mbox{ and } \rk_M(I^{\epsilon_1 + \epsilon_2 + \delta}) \leq \rk_N(J^{\epsilon_2 + \delta}) \leq \rk_O(K^\delta),
    \end{equation*} which completes the proof.
\end{proof}


\subsection{Proof of \Cref{eros=intleav}} \label{A:eros=intleav}
\begin{proof}
    \ref{item:dEhat_is_less_than_dhat}: This directly follows from the definitions of $\dhat$ and $\hatdero$.\\
    \ref{item:hat_generalizes_dE}: 
    Assume that there exists $\epsilon\in \R_{\geq 0}$ such that for all $I\in \Ical$, $\rk_N(I^{\epsilon}) \leq \rk_M(I) \mbox{ and } \rk_M(I^{\epsilon}) \leq \rk_N(I)$. Then, $\Rcal:=\{(I,I):I\in \Ical\}$ is clearly a $0$-correspondence (and thus also an $\epsilon$-correspondence), and satisfies
    \[ \mbox{for all }(I, I) \in \mathcal{R} \mbox{ and for all } \delta \in \R_{\geq 0},\; \rk_N(I^{\epsilon+\delta}) \leq \rk_M(I^\delta) \mbox{ and } \rk_M(I^{\epsilon+\delta}) \leq \rk_N(I^\delta),\]
    which completes the proof.\\
    \ref{item:eros-intleav}: 
    Let $\mathcal{R}$ be an $\epsilon$-correspondence between $\Ical$ and $\Jcal$. Since $J \subset I^{\epsilon}$, we have that for all $\delta \in \R_{\geq 0}$, $J^\delta \subset (I^\epsilon)^\delta = I^{\epsilon + \delta}$. Similarly, we have that $I^\delta \subset J^{\epsilon + \delta}$. By monotonicity of $\rk_M$ (Remark \ref{rem:monotinocity} \ref{item:monotonicity}), it follows that for all $(I, J) \in \mathcal{R}$ and for all $\delta \in \R_{\geq 0}$, $\rk_M(J^{\epsilon+\delta}) \leq \rk_M(I^\delta)$ and $\rk_M(I^{\epsilon+\delta}) \leq \rk_M(J^\delta)$. This directly implies the claim.
\end{proof}


\subsection{Proof of \Cref{lem_epsrs}} \label{A:lem_epsrs}
\begin{proof}
    The claim directly follows by proving the following two equivalences:
    \begin{align*}
        J_s \subset I^{\epsilon}_r \iff \epsilon \geq \max\left(\max_k a_k, \max_l b_l\right),\\   
        I_r \subset J^{\epsilon}_s \iff \epsilon \geq \max\left(\max_i a'_i, \max_j b'_j\right).
    \end{align*}
    We prove the first equivalence. The second equivalence is similarly proved.

    We have that $J_s \subset I^{\epsilon}_r$ iff for all $(x^s_k, y^s_k) \in \min(J_s),$ there exists $(x^r_i, y^r_i) \in \min(I_r)$ such that
    \begin{equation} \label{eq_min(I)-eps_leq_min(J)}
     (x^r_i,y^r_i) - \epsilon \leq (x^s_k,y^s_k),
    \end{equation}
    and also for all $(X^s_l, Y^s_l) \in \max(J_s),$ there exists $(X^r_j, Y^r_j) \in \max(I_r)$ such that
    \begin{equation} \label{eq_max(J)_leq_max(I)+eps}
        (X^s_l, Y^s_l) \leq (X^r_j, Y^r_j) + \epsilon.
    \end{equation}
    Since Inequality (\ref{eq_min(I)-eps_leq_min(J)}) is equivalent to \[
    \epsilon \geq \max \Bigl( (1-\delta(x^r_i, x^s_k))|x^s_k - x^r_i|, (1-\delta(y^r_i, y^s_k))|y^s_k - y^r_i| \Bigl),
    \]
    for any $(x^s_k, y^s_k) \in \min(J_s)$, there exists $(x^r_i, y^r_i) \in \min(I_r)$ satisfying  (\ref{eq_min(I)-eps_leq_min(J)}) if and only if $\epsilon \geq$ (\ref{eq_a}).
    Similarly, for each $(X^s_l, Y^s_l) \in \max(J_s)$, there exists $(X^r_j, Y^r_j) \in \max(I_r)$ satisfying (\ref{eq_max(J)_leq_max(I)+eps}) if and only if $\epsilon \geq$ (\ref{eq_b}). To sum up, we have:
    \[J_s \subset I^{\epsilon}_r \Leftrightarrow \epsilon \geq \max(\max_k a_k, \max_l b_l).\]
\end{proof}
\subsection{Proof of \Cref{thm_distance_btw_collections_intvs}} \label{A:pf_(2, 1)}
\begin{proof}
    \ref{item:between_p,q_intervals}:  To begin, note that there exists $\epsilon$-correspondence $\mathcal{R} \subset \mathcal{I} \times \mathcal{J}$ if and only if for all $I_r \in \Ical$ (resp. $J_s \in \Jcal$), there exists $J_s \in \Jcal$ (resp. $I_r \in \Ical$) such that 
    \begin{equation} \label{eq_epsinclusion}
        J_s \subset I^{\epsilon}_r, \quad I_r \subset J^{\epsilon}_s
    \end{equation}
    By \Cref{lem_epsrs}, for each $I_r \in \Ical$, there exists $J_s \in \Jcal$ satisfying (\ref{eq_epsinclusion}) if and only if
    \begin{equation*}
        \epsilon \geq \min_s \epsilon_{rs} \mbox{ for all } r.
    \end{equation*}
    Likewise, for each $J_s \in \Jcal$, there exists $I_r \in \Ical$ satisfying (\ref{eq_epsinclusion}) if and only if
    \begin{equation*}
        \epsilon \geq \min_r \epsilon_{rs} \mbox{ for all } s.
    \end{equation*}
    Therefore, for all $I_r \in \Ical$ (resp. $J_s \in \Jcal$) there exists $J_s \in \Jcal$ (resp. $I_r \in \Ical$) if and only if
    \begin{equation} \label{eq_eps} 
        \epsilon \geq \max \Bigl(\max_r(\min_s \epsilon_{rs}), \max_s(\min_r \epsilon_{rs}) \Bigl).
    \end{equation}
    Since there exists $\epsilon$-correspondence $\mathcal{R} \subset \Ical \times \Jcal$ for $\epsilon$ satisfying (\ref{eq_eps}), by the definition of $\dhat$, we have our desired result.\\
    \ref{item:between_2,1_intervals}: It suffices to show that $\max_k a_k$, $\max_l b_l$, $\max_i a'_i$, $\max_j b'_j$ are equal to (\ref{eq_(2, 1)_a}), (\ref{eq_(2, 1)_b}), (\ref{eq_(2, 1)_a'}), and (\ref{eq_(2, 1)_b'}) respectively.
    Indeed, since both $i, j$ range from $1$ to $2$, and $k=l=1$, it follows from (\ref{eq_a}) that $\max_k a_k$ in \Cref{eq:epsilons} is equal to 
    \begin{equation*}
    \begin{split}
        \max \Biggl( \min \biggl(& \max \Bigl( (1 - \delta(x_1 - b \leq x_2 - f))|x_2 - f - x_1 + b|, (1 - \delta(y_1 \leq y_2))|y_2 - y_1| \Bigl),\\
        & \max \Bigl((1 - \delta(x_1 \leq x_2 - f))|x_2 - f - x_1|, (1 - \delta(y_1 - c\leq y_2))|y_2 - y_1 + c| \Bigl) \biggl),\\
        \min \biggl(& \max \Bigl((1 - \delta(x_1 - b \leq x_2))|x_2 - x_1 + b|, (1 - \delta(y_1 \leq y_2 - g))|y_2 - g - y_1| \Bigl),\\
        & \max \Bigl((1 - \delta(x_1 \leq x_2))|x_2 - x_1|, (1 - \delta(y_1 - c \leq y_2 - g))|y_2 - g - y_1 + c| \Bigl) \biggl) \Biggl).
    \end{split}
    \end{equation*}
    Since $1 - \delta(x, y) = \delta(y, x)$ (cf. Equation (\ref{eq:delta})), by the definitions of $F$ and $H$ (see right above \Cref{thm_distance_btw_collections_intvs}), we have that $\max_k a_k$ in \Cref{eq:epsilons} equals (\ref{eq_(2, 1)_a}). 
    Similarly, the other terms of the maxima on the right-hand side of Equation (\ref{eq:epsilons}) is also equal to a term of (\ref{eq_(2, 1)_b})-(\ref{eq_(2, 1)_b'}).
\end{proof}


\subsection{Proof of \Cref{epsilon_IJ}} \label{A:epsilon_IJ}
\begin{proof}
    It suffices to show that all terms (\ref{eq_(2, 1)_a})-(\ref{eq_(2, 1)_b'}) are less than or equal to $2\epsilon$.  
    Since $||\mathbf{v}_{I_r} - \mathbf{v}_{J_s}||_\infty \leq \epsilon$ implies that \[|x_1 - x_2|, |y_1 - y_2|, |a - e|, |b - f|, |c - g|, |d - h| \leq \epsilon,\] 
    we have that $|x_1 - b - x_2 + f| \leq |x_1 - x_2| + |f - b| \leq 2\epsilon$. Similarly, $|y_1 - c - y_2 + g|, |x_2 - d  - x_1 + h|, |y_2 -a - y_1 + e| \leq 2\epsilon$, which implies that (\ref{eq_(2, 1)_a}) $\leq 2 \epsilon$. Indeed, 
    since $|x_1 - b - x_2 + f| \leq 2\epsilon$ and thus $\delta(x_2 - f \leq x_1 - b)|x_1 - b - x_2 + f| \leq 2\epsilon$ and in turn \[
    F(x_2-f, x_1-b, y_1, y_2) = \max(\delta(x_2 - f \leq x_1 - b)|x_1 - b - x_2 + f|, \delta(y_1, y_2)|y_2 - y_1|) \leq 2\epsilon. 
    \] 
    Similarly, we have that \[
    F(x_1, x_2, y_2-g,y_1-c) = \max(\delta(x_1, x_2)|x_2 - x_1|, \delta(y_2 - g, y_1 - c)|y_1 - c - y_2 + g|) \leq 2\epsilon.
    \] Thus, (\ref{eq_(2, 1)_a}) is less than or equal to  $2\epsilon$. Via similar arguments, one can show that the other terms (\ref{eq_(2, 1)_b})-(\ref{eq_(2, 1)_b'}) are also less than or equal to $2\epsilon$, which completes the proof.
\end{proof}

\newpage

\section{Additional tables}

\begin{table}[h] 
\centering 
\begin{tabular}{ll}
\toprule
Dataset & Acronym  \\ 
\midrule
\texttt{Coffee} & C
\\ 
\texttt{DistalPhalanxOutlineAgeGroup} & DPA
\\ 
\texttt{DistalPhalanxOutlineCorrect} & DPC
\\ 
\texttt{DistalPhalanxTW} & DPT
\\ 
\texttt{ProximalPhalanxOutlineAgeGroup} & PPA
\\ 
\texttt{ProximalPhalanxOutlineCorrect} & PPC
\\
\texttt{ProximalPhalanxTW} & PPT
\\ 
\texttt{ItalyPowerDemand} & IPD
\\ 
\texttt{ECG200} & ECG
\\ 
\texttt{MedicalImages} & MI
\\ 
\texttt{Plane} & P
\\ 
\texttt{SwedishLeaf} & SL
\\ 
\texttt{GunPoint} & GP
\\ 
\texttt{GunPointAgeSpan} & GPA
\\ 
\texttt{GunPointMaleVersusFemale} & GPM
\\ 
\texttt{GunPointOldVersusYoung} & GPO
\\
\texttt{PowerCons} & PC
\\ 
\texttt{SyntheticControl} & SC
\\ 
\bottomrule
\end{tabular}
\caption{\label{tab:acronyms} UCR dataset acronyms.}
\end{table}

\end{document}